\tikzstyle{wv}=[circle,draw=black!90,fill=white!20,thick,inner sep=2pt,minimum width=5pt] 
\tikzstyle{bv}=[circle,draw=black!90,fill=black!100,thick,inner sep=2pt,minimum width=5pt] 
\tikzstyle{gv}=[circle,draw=black!70,fill=gray!100,thick,inner sep=2pt,minimum width=5pt]
\newtheorem{theorem}{Theorem}
\newtheorem{definition}{Definition}
\newtheorem{lemma}{Lemma}
\newtheorem{proposition}{Proposition}
\newtheorem{example}{Example}
\newtheorem{remark}{Remark}
\newcommand{\la}{\lambda}
\newcommand{\cycty}{cyctype}
\newcommand{\ty}{type}
\newenvironment{theorem*}{\medskip\noindent {\bf Theorem} \it}{\medskip}
\def\ie{{\it i.e.\ }}
\def\eg{{\it e.g.\ }}
\title{Bijective evaluation of the connection coefficients of the double coset algebra}
\author{Alejandro H. Morales and Ekaterina A. Vassilieva}
\begin{document}

\maketitle

\begin{abstract}
This paper is devoted to the evaluation of the generating series of the connection coefficients of the double cosets of the hyperoctahedral group. Hanlon, Stanley, Stembridge (1992) showed that this series, indexed by a partition $\nu$, gives the spectral distribution of some random real matrices that are of interest in random matrix theory. We  provide an explicit evaluation of this series when $\nu=(n)$ in terms of monomial symmetric functions. Our development relies on an interpretation of the connection coefficients in terms of locally orientable hypermaps and a new bijective construction between locally orientable partitioned hypermaps and some permuted forests.
\end{abstract}

%\clearpage
%\tableofcontents

\section{Introduction}

In what follows, we denote by $\la=(\la_1,\la_2,\ldots,\la_k)_{\geq} \vdash n$ an integer partition of $n$ and $\ell(\la)=k$ the number of parts of $\la$. 
%Thus, $\la=(\la_1,\ldots, \la_k)$ where $\la_1 \geq \cdots \geq \la_k\geq 1$ and $\sum \la_i=n$. 
If $n_i(\la)$ is the number of parts of $\la$ that are equal to $i$ (by convention $n_0(\la)=0$), then we write $\la $ as $1^{n_1(\la)}\,2^{n_2(\la)}\ldots$ and let $Aut(\la)=\prod_i n_i(\la)!$. Also, if $\la\vdash n$, let $\la\la$ and $2\la$ be the partitions of $2n$ $(\la_1,\la_1,\la_2,\la_2,\ldots)$ and $(2\la_1,2\la_2,\ldots)$ respectively. Let $[m]=\{1,\ldots,m\}$ and $S_m=S_m([m])$ be the symmetric group on $m$ elements, and let $\mathcal{C}_{\la}$ be the conjugacy class in $S_m$ of permutations $w$ with cycle type, $\cycty(w)$, $\la\vdash m$. 

We look at {\bf perfect pairings} of the set $[n]\cup [\widehat{n}]=\{1,\ldots n, \widehat{1},\ldots, \widehat{n}\}$ of {\bf non-hat} or {\bf hat} numbers which we view as fixed point free involutions in $S_{2n}([n]\cup [\widehat{n}])$. Note that the disjoint cycles of the product $f\circ g$ have repeated lengths \ie, $f\circ g \in \mathcal{C}_{\la\la}$. Also, given $w\in S_{2n}$, let $f_w$ be the pairing $(w^{-1}(1),w^{-1}(\hat{1}))\cdots(w^{-1}(n),w^{-1}(\hat{n}))$.  

Let $B_n$ be the hyperoctahedral group which we view as the centralizer in $S_{2n}$ of the involution $f_{\star}=(1\widehat{1})(2\widehat{2})\cdots(n\widehat{n})$. Then $|B_n| = 2^nn!$, and it is well known that the double cosets of $B_n$ in $S_{2n}$ are indexed by partitions $\nu$ of $n$ and consist of permutations $w\in S_{2n}$ such that the cycle type of $f_{\star}\circ f_w$ is $\nu\nu$ \cite[Ch. VII.2]{M}. If we denote such a double coset by $K_{\nu}$ and pick from it a fixed element $w_{\nu}$, then let $b_{\la,\mu}^{\nu}$ be the number of ordered factorizations $u_1\cdot u_2$ of $w_{\nu}$ where $u_1\in K_{\la}$ and $u_2\in K_{\mu}$. \ie,
\[
b_{\la,\mu}^n = ~\mid \{ (u_1,u_2) \mid u_1\cdot u_2 = w_{\nu}, u_1 \in K_{\la}, u_2 \in K_{\mu} \}  \mid. 
\]
We provide a combinatorial formula for $b_{\la,\mu}^{\nu}$ when $\nu=(n)$  (say $w_{(n)}=(123\ldots n)(\widehat{n}\,n\widehat{-}1\,n\widehat{-}2\ldots \widehat{1})$) by interpreting these factorizations as {\bf locally orientable unicellular  (partitioned) hypermaps}. 

We give this formula using symmetric functions: for $\la \vdash n$, we use the monomial symmetric function $m_{\la}({\bf x})$ which is the sum of all different monomials obtained by permuting the variables of $x_1^{\la_1}x_2^{\la_2}\cdots$, and the power symmetric function $p_{\lambda}({\bf x})$, defined multiplicatively as $p_{\la}=p_{\la_1}p_{\la_2}\cdots$ where $p_n({\bf x})=m_n({\bf x})=\sum_{i} x_i^n$.

%Let $f_1=(1\,\widehat{n})(2\,\widehat{1})(3\,\widehat{2})\ldots(n\,n\widehat{-}1)$ and $f_2 = (1\,\widehat{1})(2\,\widehat{2})\ldots(n\,\widehat{n})$.
%Then $f_1\circ f_2 = (123\ldots n)(\widehat{n}n\widehat{-}1\, n\widehat{-}2\ldots\widehat{1}) \in \mathcal{C}_{(n),(n)}$. For $\la,\mu \vdash n$ let 
%\begin{equation}
%L_{\lambda, \mu}^{n} =~ \mid\{ \mbox{ pairings } f_3 \mbox{ pairings in } S_{2n} \mbox{ such that } f_3\circ f_1 \in \mathcal{C}_{\lambda\lambda} \mbox{ and } f_3\circ f_2 \in \mathcal{C}_{\mu\mu} \}\mid
%\end{equation}
%These numbers count {\bf locally orientable unicellular hypermaps} which are certain bipartite graphs embedded on locally orientable surfaces which are generalizations of orientable unicellular maps that are counted in the Harer Zagier formula \cite{HZ} (see Section \ref{}). In addition, the numbers $2^nn!L^n_{\la,\mu}$ are certain {\bf connection coefficients} of the double coset algebra of the hyperoctahedral group \cite{HSS} and \cite{GJ1} (see Section \ref{}). 

%The main combinatorial result of this text is the following formula for $L^n_{\la,\mu}$.

\begin{theorem}\label{thm:main}
Let  $b_{\la,\mu}^{n}$ be the number of ordered factorizations $u_1\cdot u_2$ of $w_{(n)}$ where $u_1\in K_{\la}$ and $u_2\in K_{\nu}$. If $p_{\la}$ and $m_{\la}$ are the power and monomial symmetric functions then 
%Let  $L^n_{\la,\mu}$ be the number of pairings in $S_{2n}$ such that $f_3\circ f_1 \in \mathcal{C}_{\lambda\lambda}$ and $f_3\circ f_2 \in \mathcal{C}_{\mu\mu}$. If $p_{\la}({\bf x})$ and $m_{\la}({\bf x})$ are the power and monomial symmetric functions then 
\begin{multline} \label{eq:mainthm} 
\frac{1}{2^n n!}\sum_{\lambda,\mu \vdash n}b_{\lambda, \mu}^{n}\,p_{\lambda}p_{\mu}=\sum_{\lambda,\mu \vdash n}Aut(\lambda)Aut(\mu)m_{\lambda}m_{\mu} \sum_{{\bf A}\in M(\lambda,\mu)}\frac{\mathcal{N}({\bf A})}{{\bf A}!}\frac{(n-q-2r)!(n-p-2r)!}{(n+1-p-q-2r)!}\times\\
\times \frac{p'!q'!\left ( r - p' \right)!\left ( r- q' \right)!}{2^{2r-p'-q'}}\prod_{i,j,k}{\binom{i-1}{j,k,j+k}}^{(P+Q)(i,j,k)}{\binom{i-1}{j,k,j+k-1}}^{(P'+Q')(i,j,k)} 
\end{multline}
Where, $M(\lambda,\mu)$ is the set of $4$-tuples ${\bf A}=(P,P',Q,Q')$ of tridimensional arrays of non negative integers indexed by $i,j,k\geq 0$ 
%indexed by $i,j,k\geq 0$ 
with $p=| P| = \sum_{i,j,k\geq 0}P_{ijk} \neq 0,  p'=\ell(\la)-p=| P'|, q=| Q |$, $q'=\ell(\mu)-q=| Q' |$, and 
$$
\begin{array}{rclrcl}
n_i(\lambda)&=&\sum_{j,k}P_{ijk} + P'_{ijk}, & n_i(\mu) &=& \sum_{j,k}Q_{ijk} + Q'_{ijk},\\[2mm]
 r &=& \sum_{i,j,k}(j+k)(P_{ijk} + P'_{ijk}), & r&=&\sum_{i,j,k}(j+k)(Q_{ijk} + Q'_{ijk}),  \\[2mm]
 q' &=&\sum_{i,j,k}j(P_{ijk} + P'_{ijk}), & p' &=& \sum_{i,j,k}j(Q_{ijk} + Q'_{ijk}).\\
\end{array}
$$

where ${\bf A!} = \prod_{i,j,k}P_{ijk}!\,P'_{ijk}!\,Q_{ijk}!\,Q'_{ijk}!$ and if $q'\neq 0$,
\footnotesize
\begin{multline*}
\mathcal{N}({\bf A}) = \frac{1}{q'}\sum_{t,u,v}\frac{tP_{tuv}}{t-2u-2v}\left [(t-2b-2v)\left (\frac{\delta_{p'\neq 0}}{p'}\sum_{i,j,k}{jQ}\sum_{i,j,k}{jP'}+\frac{\sum_{i,j,k}{(i-1-2j-2k)Q}\sum_{i,j,k}{jP}}{n-q-2r}\right) + \right.   \\
+ u\left. \left (\frac{\delta_{p'\neq 0}}{p'}\sum_{i,j,k}{(i-2j-2k)P'}\sum_{i,j,k}{jQ'}+ \sum_{i,j,k}{(i-2j-2k)Q'}\frac{1+\sum_{i,j,k}{(i-1-2j-2k)P}}{n-q-2r}\right )\right], 
\end{multline*}
\normalsize
otherwise if $q'=0$, then $\mathcal{N}({\bf A})=\sum_{t,u,v}tP_{tuv}$.

\end{theorem}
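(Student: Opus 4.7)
The plan is to translate the algebraic quantity $b_{\lambda,\mu}^n$ into a combinatorial count of locally orientable unicellular hypermaps, then pass through an intermediate notion of \emph{partitioned} hypermap that linearizes the generating function in one basis, and finally build a bijection to a family of decorated forests whose weight enumerator matches the right-hand side of \eqref{eq:mainthm} term by term.

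First I would establish that a factorization $w_{(n)} = u_1 \cdot u_2$ with $u_i \in K_{\lambda_i}$ is equivalent to a triple $(f_\star, f_1, f_2)$ of fixed-point free involutions of $[n]\cup[\widehat n]$ such that $f_\star f_1 \in \mathcal C_{\lambda\lambda}$, $f_2 f_1 \in \mathcal C_{\mu\mu}$, and $f_\star f_2$ is the long cycle $w_{(n)}$; this is the standard dictionary that realizes each factorization as a one-face locally orientable hypermap with $\ell(\lambda)$ hyperedges and $\ell(\mu)$ vertices. Next, following the Hanlon--Stanley--Stembridge/Goulden--Jackson philosophy, I would introduce a \emph{partitioned} hypermap as a hypermap together with a set partition on each of the vertex and hyperedge sets whose block sizes refine $\mu$ and $\lambda$ respectively. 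Summing over all partitions transforms the basis change $p_\lambda \leftrightarrow Aut(\lambda) m_\lambda$: this is exactly the identity that turns the left-hand side $\frac{1}{2^n n!}\sum b_{\lambda,\mu}^n p_\lambda p_\mu$ into a generating function for partitioned hypermaps weighted by $Aut(\lambda)Aut(\mu)m_\lambda m_\mu$, at which point the theorem reduces to counting partitioned hypermaps whose block-type is given by the two compositions coming from $\lambda$ and $\mu$.

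The core combinatorial step is then to construct an explicit bijection between partitioned locally orientable unicellular hypermaps and a set of \emph{permuted forests} decorated by the four-array data $\mathbf A = (P, P', Q, Q') \in M(\lambda,\mu)$. The arrays should record, for each block of the partition on vertices/hyperedges, its size $i$ together with two secondary statistics $j$ and $k$ counting twisted and non-twisted ``thorn'' children; the primes $P'$, $Q'$ distinguish a distinguished (root-like) block from the others, which is what forces the $p' = \ell(\lambda)-p$ and $q' = \ell(\mu)-q$ constraints. Once the bijection is in place, the forest enumeration splits into independent local choices: at each internal node one chooses how to interleave $j+k$ paired thorns among $i-1$ slots, giving the multinomials $\binom{i-1}{j,k,j+k}$ (or $\binom{i-1}{j,k,j+k-1}$ for the marked blocks), and the global factor $\frac{(n-q-2r)!(n-p-2r)!}{(n+1-p-q-2r)!}$ together with $\frac{p'!q'!(r-p')!(r-q')!}{2^{2r-p'-q'}}$ counts the global gluings and the $\mathbb Z/2$ choices of twist. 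The function $\mathcal N(\mathbf A)$ arises from marking a distinguished corner on the unique face, and a careful accounting according to whether the distinguished block lies in $P$, $P'$, $Q$, or $Q'$ yields the two cases $q' \neq 0$ and $q' = 0$.

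The hardest step will be the bijection itself: the locally orientable setting forces one to track twists along every edge, and the distinction between the four arrays $P, P', Q, Q'$ must be built into the forest so that the numerics $(p,p',q,q',r)$ are invariants of the hypermap that the bijection visibly preserves. I would expect to carry it out inductively by peeling off a leaf block according to a canonical rule determined by the cycle structure of $f_\star f_2$ on $w_{(n)}$, verifying at each step that the local contribution to the weight agrees with the product of multinomials; the correctness of $\mathcal N(\mathbf A)$ will come out of identifying which block receives the root of the forest, and the piecewise formula for $\mathcal N$ reflects the case analysis of whether the root block is marked or unmarked on each side.
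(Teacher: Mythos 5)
Your plan follows essentially the same route as the paper: reinterpret $b_{\lambda,\mu}^n$ as counting unicellular locally orientable hypermaps via the Goulden--Jackson correspondence, pass to partitioned hypermaps to realize the change of basis from $p_\lambda p_\mu$ to $Aut(\lambda)Aut(\mu)m_\lambda m_\mu$, and then biject the partitioned hypermaps of each fixed array-type $\mathbf{A}=(P,P',Q,Q')$ with permuted forests whose enumeration (via Lagrange inversion) produces the summand of \eqref{eq:mainthm}. The only notable difference is one of detail rather than strategy: in the paper the primed arrays $P',Q'$ record all blocks whose maximal element is matched by $f_3$ to an element of the same parity (equivalently, all roots of non-seed trees in the forest), not a single distinguished block, but this does not change the architecture of the argument.
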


\begin{remark}[special cases] There are special cases of the formula detailed in Appendix \ref{app}.
\end{remark}

\subsection{Background on connection coefficients $b^{\nu}_{\la,\mu}$}

By abuse of notation, let the double coset $K_{\nu}$ also represent the sum of its elements in the group algebra $\mathbb{C}S_{2n}$. Then $K_{\nu}$ form a basis of a commutative subalgebra of $\mathbb{C}S_{2n}$ (the {\em Hecke algebra} of the {\em Gelfand pair} $(S_{2n},B_n)$) and one can check that $K_{\la}\cdot K_{\mu}=\sum_{\nu} b^{\nu}_{\la,\mu} K_{\nu}$. Thus, $\{b_{\la,\mu}^{\nu}\}$ are the {\bf connection coefficients} of this double coset algebra.

We use $Z_{\la}({\bf x})$ to denote the {\bf zonal polynomial} indexed by $\la$ which can be viewed as an analogue of the Schur function $s_{\la}$ (for more information on these polynomials see \cite[Ch. VII]{M}). In terms of $p_{\mu}$: $s_{\la}= \sum_{\mu} z_{\mu}^{-1} \chi^{\la}_{\mu}p_{\mu}$ where  $z_{\la}= Aut(\la)\prod_i i^{n_i(\la)}$, $\chi^{\la}_{\mu}$ are the irreducible characters of the symmetric group; and  $Z_{\la}({\bf x}) = \frac{1}{|B_n|}\sum_{\mu \vdash n} \varphi^{\la}(\mu) p_{\mu}$ where $\varphi^{\la}(\mu)=\sum_{w\in K_{\mu}} \chi^{2\la}_{\cycty(w)}$. The following formula for the connection coefficients in terms of $\varphi^{\la}(\mu)$ was given in \cite[Lemma 3.3]{HSS}:
\begin{equation} \label{eq:conncoeffs}
b^{\nu}_{\la,\mu} = \frac{1}{|K_{\nu}|} \sum_{\beta \vdash n }\frac{1}{H_{2\nu}} \varphi^{\beta}({\nu})\varphi^{\beta}({\la}) \varphi^{\beta}({\mu}), 
\end{equation}
where $| K_{\nu} | = | B_n | | \mathcal{C}_{\nu} | 2^{n-\ell(\nu)}$ \cite[Lemma 2.1]{GJ1}, and $H_{2\la}$ is the product of all the {\em hook-lengths} of the partition $2\la$.

Let $\Psi^{\nu}({\bf x},{\bf y}) = \frac{1}{| B_n|}\sum_{\la,\mu}b^{\nu}_{\la,\mu} p_{\la}({\bf x})p_{\mu}({\bf y})$ so that $\Psi^{(n)}$ is the LHS of \eqref{eq:mainthm}. Equation \eqref{eq:conncoeffs} immediately implies that $\Psi_{\nu}({\bf x},{\bf y})= \frac{1}{| K_{\nu}|}\sum_{\la \vdash n} \frac{| B_n |}{H_{2\la}} \varphi^{\la}(\nu) Z_{\la}({\bf x}) Z_{\mu}({\bf y})$. Moreover, if for an $n\times n$ matrix $X$ we say that $p_{k}(X) =trace(X^{k})$, then in \cite[Thm. 3.5]{HSS} it was shown that $\Psi^{\nu}$ is also the expectation of $p_{\nu}(XUYU^T)$ over $U$, where $U$ are $n\times n$  matrices whose entries are independent standard normal random {\em real} variables and $X,Y$ are arbitrary but fixed real symmetric matrices ($\phantom{~}^T$ denotes transpose). Note that the quantities $trace((XUYU^T)^k)$ determine the eigenvalues of $XUYU^T$.

\section{Combinatorial formulation}
\subsection{Unicellular locally orientable hypermaps}
From a topological point of view, {\bf locally orientable hypermaps} of n edges can be defined as a connected bipartite graph with black and white vertices. We view each edge as being a {\em fat} edge composed of two edge-sides both connecting the two incident vertices (\begin{tikzpicture}[scale=0.75]
\node (4) at (0,0) [wv] {};
\node (5) at (1,0) [bv] {};
\node (0) at (0,-0.1) {};
\node (1) at (1,-0.1) {};
\node (2) at (0,0.1) {};
\node (3) at (1,0.1) {};
\draw [-] (0) -- (1);
\draw [-] (2) -- (3);
\end{tikzpicture}). We call these edge-sides {\bf half edges}. This graph is embedded on a locally orientable surface such that if we cut the graph from the surface the remaining part consists of connected components called  faces or cells, each homomorphic to an open disk. Note that two half edges can be parallel or cross in the middle. A  crossing or a twist of two half edges  (\begin{tikzpicture}[scale=0.75]
\node (4) at (0,0) [wv] {};
\node (5) at (1,0) [bv] {};
\node (0) at (0,-0.1) {};
\node (1) at (1,-0.1) {};
\node (2) at (0,0.1) {};
\node (3) at (1,0.1) {};
\draw [-] (0) -- (3);
\draw [-] (2) -- (1);
\end{tikzpicture}) indicates a change of orientation in the map and that the map is embedded on a non orientable surface (projective plan, Klein bottle, Moebius strip,...).  
%Graphical examples of locally orientable hypermaps can be found in \cite{FS}.
In \cite{GJ1}, it was shown that such hypermaps admit a natural formal description involving triples of perfect pairings $(f_1, f_2, f_3)$ on the set of half edges  where:
\begin{compactitem}
\item $f_3$ associates half edges of the same edge,
\item $f_1$ associates immediately successive (i.e. with no other half edges in between) half edges moving around the white vertices, and 
\item $f_2$ associates immediately successive half edges moving around the black vertices.
\end{compactitem}

Formally we label each half edge with an element in $[n]\cup [\widehat{n}]=\{1,\ldots,n,\widehat{1},\ldots,\widehat{n}\}$ and define $(f_1, f_2, f_3)$ as perfect pairings on this set.
Combining the three pairings gives the fundamental characteristics of the hypermap since:
\begin{compactitem}
\item The cycles of $f_3\circ f_1$ give the succession of edges around the white vertices. If $f_3\circ f_1 \in \mathcal{C}_{\lambda\lambda}$ then the degree distribution of the white vertices is $\lambda$ (counting only once each pair of half edges belonging to the same edge),
\item The cycles of $f_3\circ f_2$ give the succession of edges around the black vertices. If $f_3\circ f_2 \in \mathcal{C}_{\mu\mu}$ then the degree distribution of the black vertices is $\mu$ (counting only once each pair of half edges belonging to the same edge),
\item The cycles of $f_1\circ f_2$ encode the faces of the map. If $f_1\circ f_2 \in \mathcal{C}_{\nu\nu}$ then the degree distribution of the faces is $\nu$
\end{compactitem}
In what follows, we consider the number  $L_{\lambda, \mu}^{n}$ of {\bf unicellular}, or one-face, locally orientable hypermaps with face distribution $\nu=n^1=(n)$, white vertex distribution $\lambda$, and black vertex distribution $\mu$.

Let $f_1$ be the pairing $(1\,\widehat{n})(2\,\widehat{1})(3\,\widehat{2})\ldots(n\,n\widehat{-}1)$ and $f_2 =f_{\star}= (1\,\widehat{1})(2\,\widehat{2})\ldots(n\,\widehat{n})$. We have $f_1\circ f_2 = (123\ldots n)(\widehat{n}n\widehat{-}1\, n\widehat{-}2\ldots\widehat{1}) \in \mathcal{C}_{(n),(n)}$. Then by the above description, one can see that
\[
L_{\lambda, \mu}^{n} =\,\, \mid \{ f_3 \mbox{ pairings in } S_{2n}([n]\cup [\widehat{n}]) \mid f_3\circ f_1 \in \mathcal{C}_{\lambda\lambda}, f_3\circ f_2 \in \mathcal{C}_{\mu\mu} \} \mid.
\]
Moreover, the following relation between $L^n_{\la,\mu}$ and $b^n_{\la,\mu}$ holds: \cite[Cor 2.3]{GJ1}
\begin{equation}
\label{eq:lb}
L_{\lambda, \mu}^{n} = \frac{1}{2^nn!}b_{\lambda, \mu}^{n}.
\end{equation}
Thus we can encode the connection coefficients as numbers of locally orientable  hypermaps and 
\begin{equation} \label{eq:psitermsL}
\Psi^{(n)}=\sum_{\la,\mu\vdash n} L^n_{\la,\mu} p_{\la}p_{\mu}.
\end{equation}
%Let $f_1$ be the pairing $f_{\star}=(1\,\widehat{n})(2\,\widehat{1})(3\,\widehat{2})\ldots(n\,n\widehat{-}1)$ and $f_2 = (1\,\widehat{1})(2\,\widehat{2})\ldots(n\,\widehat{n})$. We have $f_1\circ f_2 = (123\ldots n)(\widehat{n}n\widehat{-}1\, n\widehat{-}2\ldots\widehat{1}) \in \mathcal{C}_{(n),(n)}$. 
%Using  \cite{GJ1}, one can easily show the following relation:
%\begin{equation}
%\label{eq:lb}
%L_{\lambda, \mu}^{n} = \frac{1}{2^nn!}b_{\lambda, \mu}^{(n)}
%\end{equation}
%Furthermore, following Lemma $3.2$ of \cite{HSS}, we have:
%\begin{equation}
%L_{\lambda, \mu}^{n} = \mid \{ f_3 \mbox{ pairing such that } f_3\circ f_1 \in \mathcal{C}_{\lambda,\lambda} \mbox{ and } f_3\circ f_2 \in \mathcal{C}_{\mu,\mu} \} \mid
%\end{equation}

\begin{example}
Figure \ref{fig:example} depicts a locally orientable unicellular hypermap in $L_{\lambda, \mu}^{n}$ with $\lambda=1^12^23^14^1$ and $\mu = 3^14^15^1$ (do not pay attention to the geometric shapes at this stage) where
 where
\begin{align*}
f_3 &= (1\,\widehat{3})(2\,7)(3\,\widehat{10})(4\,12)(5\,\widehat{9})(6\,10)(8\,\widehat{12})(9\,\widehat{8})(11\,\widehat{2})(\widehat{1}\,\,\widehat{6})(\widehat{4}\,\,\widehat{5})(\widehat{7}\,\,\widehat{11}),\\
f_3\circ f_1 &= (\,\widehat{8}\,)(9)\,\,(2\,\widehat{6})(7\,\widehat{1})\,\,(3\,\,11)(\widehat{10}\,\,\widehat{2})\,\,(5\,\widehat{5}\,\widehat{10})(\widehat{9}\,6\,\,\widehat{4})\,\,(1\,8\,\widehat{11}\,4)(\widehat{3}\,12\,\widehat{7}\,\widehat{12}),\\
f_3\circ f_2 &= (2\,11\,\widehat{7})(7\,\widehat{11}\,\widehat{2})\,\,(1 \widehat{6}\,10\,3)(\widehat{3}\,\widehat{10}\,6\,\widehat{1})\,\,(4\,\widehat{5}\,\,\widehat{9}\,\,\widehat{8}\,\,\widehat{12})(12\,8\,9\,5\,\widehat{4}).
\end{align*}

\end{example}

\begin{figure}[htbp]
  \begin{center}
    \includegraphics[width=0.35\textwidth]{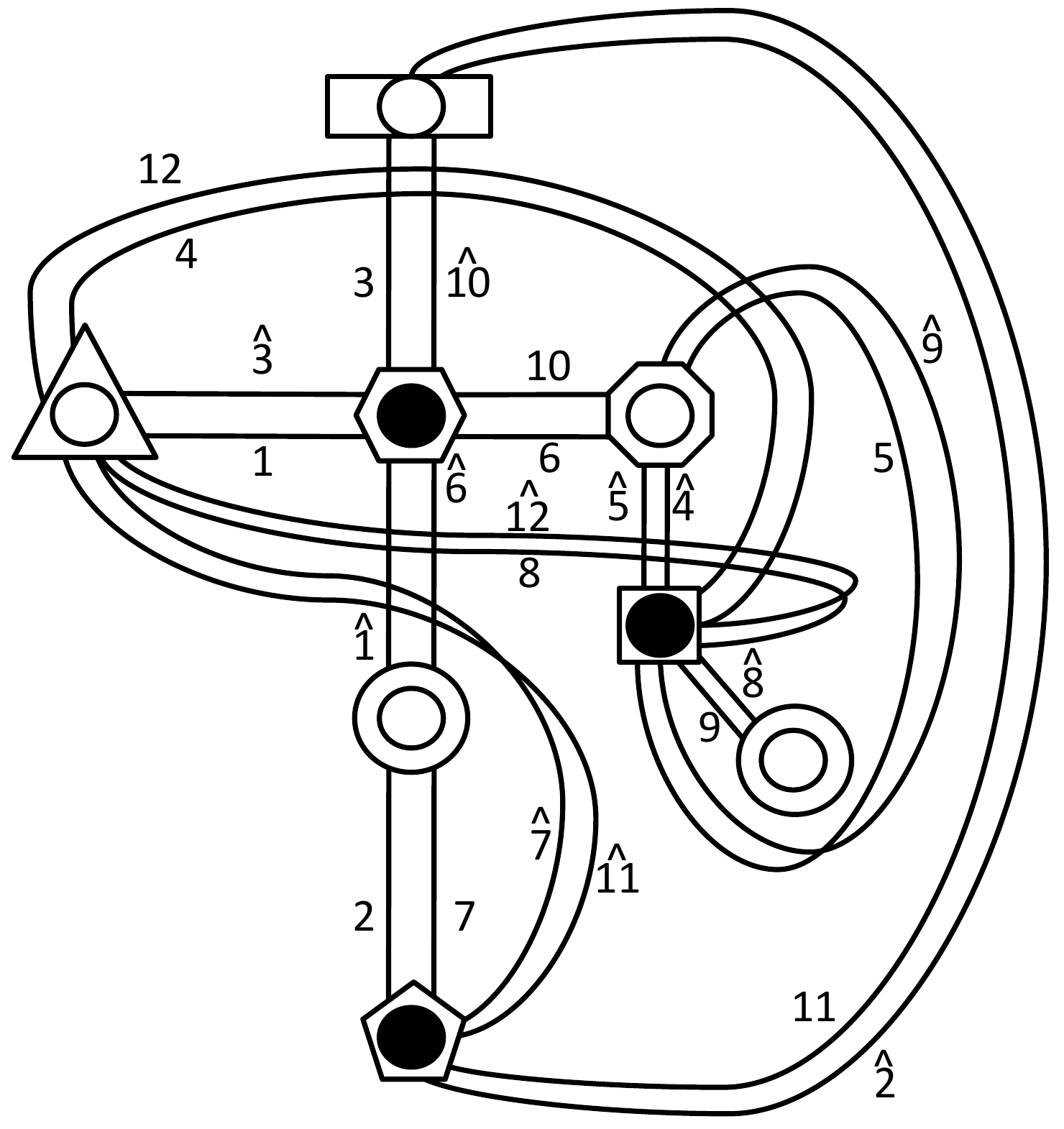}
    \caption{A unicellular locally orientable hypermap}
    \label{fig:example}
  \end{center}
\end{figure}
\subsection{Locally orientable partitioned hypermaps}
Next, we consider locally orientable hypermaps where we partition the white vertices (black resp.). In terms of the pairings, this means we ``color'' the cycles of $f_3\circ f_1$ ($f_3\circ f_2$ resp.) allowing repeated colors but imposing that the two cycles corresponding to each white (black resp.) vertex have the same color. Definition \ref{def:phm} below makes this more precise. 

We will use $\pi$ to denote a set partition of $[n]\cup[\widehat{n}]$ with blocks $\{\pi^1,\pi^2,\ldots,\pi^m\}$. We will only work with set partitions whose blocks have even size. We say that these set partitions $\pi$ have {\bf half-type}, $\frac12$-$\ty$, $\la$ if the cardinalities of the blocks are given by a the integer partition $2\la$ of $2n$.
  
\begin{definition}[Locally orientable partitioned hypermaps] \label{def:phm}
We consider the set of triples, $\mathcal{LP}_{\lambda,\mu}^{n} = (f_3,\pi_1,\pi_2)$ where $f_3$ is a pairing on $[n]\cup [\widehat{n}]$, $\pi_1$ and $\pi_2$ are set partitions on  $[n]\cup [\widehat{n}]$ with blocks of even size and of respective half-types, $\frac12$-$\ty$, $\lambda$ and $\mu$,  with the constraint that $\pi_i$ $(i=1,2)$ is stable by $f_i$ and $f_3$.  Any such triple is called a {\bf locally orientable partitioned hypermap} of type $(\lambda,\mu)$. Furthermore, let $LP_{\lambda, \mu}^{n} = |\mathcal{LP}_{\lambda,\mu}^{n}|$.
\end{definition}

\begin{remark}
The analogous notion of partitioned or colored map is common in the study of orientable maps \eg, see \cite{L},\cite{GN}. Recently, Bernardi \cite{B} extended the approach in \cite{L} to find a correspondence between locally orientable partitioned maps and orientable maps that have a certain distinguished spanning subgraph (called a {\em planar-rooted map}). This correspondence does not fully apply to count locally orientable hypermaps (locally orientable bipartite maps \cite[Sect. 7]{B}).
\end{remark}

\begin{lemma} The number of hat numbers in a block is equal to the number of non hat numbers\end{lemma}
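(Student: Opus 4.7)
The plan is to exploit the fact that both pairings $f_1$ and $f_2$ are \emph{hat--non-hat} matchings: every transposition of $f_1$ and of $f_2$ pairs exactly one hat element with one non-hat element. This is immediate from the explicit formulas, since $f_2 = (1\,\widehat{1})(2\,\widehat{2})\cdots(n\,\widehat{n})$ and $f_1 = (1\,\widehat{n})(2\,\widehat{1})(3\,\widehat{2})\cdots(n\,n\widehat{-}1)$. I would record this observation as the only ingredient needed.

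Next, fix any block $B$ of $\pi_1$. By Definition~\ref{def:phm}, $\pi_1$ is stable under $f_1$, so $f_1(B)=B$, and hence the restriction $f_1|_B$ is a fixed-point-free involution on $B$. By the observation above, each pair of $f_1|_B$ consists of one element of $B\cap[n]$ and one element of $B\cap[\widehat{n}]$. Consequently $f_1|_B$ induces a bijection from $B\cap[n]$ to $B\cap[\widehat{n}]$, and therefore $|B\cap[n]|=|B\cap[\widehat{n}]|$. The same argument with $f_2$ in place of $f_1$ handles the blocks of $\pi_2$.

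There is no real obstacle here: the content of the lemma is just that the stability axiom forces the hat-flipping structure of $f_1,f_2$ to descend to each block. The only thing worth being careful about is to invoke the correct stability hypothesis ($\pi_1$ with $f_1$, $\pi_2$ with $f_2$) and to note that $f_1,f_2$ have no fixed points, so that the induced matching inside $B$ is a genuine bijection rather than merely a relation.
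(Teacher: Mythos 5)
Your proof is correct and is essentially the paper's own argument: the paper likewise observes that since $\pi_1$ is stable by $f_1$ and $f_1(i)=i\widehat{-}1$ sends a non-hat number to a hat number (resp.\ $\pi_2$ stable by $f_2$ with $f_2(i)=\widehat{i}$), the restriction of $f_i$ to each block gives a bijection between its non-hat and hat elements. Your write-up just spells out the involution/bijection step a bit more explicitly.
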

\begin{proof} If non hat number $i$ belongs to block $\pi_1^k$ then $f_1(i) = i\widehat{-}1$ belongs as well to $\pi_1^k$. Same argument applies to blocks of $\pi_2$ with $f_2(i)=\widehat{i}$.\end{proof}
\begin{example}
As an example, the locally orientable hypermap on Figure \ref{fig:example} is partitioned by the blocks: 
\begin{align*}
\nonumber \pi_1 &= \{\{\widehat{12},1,\widehat{3},4,\widehat{7},8,\widehat{11},12\},\,\{\widehat{1},2,\widehat{6},7,\widehat{8},9\},\,\{\widehat{2},3,\widehat{10},11\},\,\{\widehat{4},5,\widehat{5},6,\widehat{9},10\}\}\\
\nonumber \pi_2 &= \{\{1,\widehat{1},3,\widehat{3},6,\widehat{6},10,\widehat{10}\},\phantom{\{}\{2,\widehat{2},7,\widehat{7},11,\widehat{11}\},\phantom{\{}\{4,\widehat{4},5,\widehat{5},8,\widehat{8},9,\widehat{9},12,\widehat{12}\}\}
\end{align*}
(blocks are depicted by the geometric shape, all the vertices belonging to a block have the same shape).
\end{example}

Let $\overline{R}_{\lambda,\mu}$ be the number of unordered partitions $\pi=\{\pi^{1}, \ldots, \pi^{p}\}$ of the set $[\ell(\lambda)]$ such that $\mu_j = \sum_{i\in \pi^j} \lambda_i$ for $1\leq j \leq \ell(\mu)$. Then for the monomial and power symmetric functions, $m_{\lambda}$ and $p_{\lambda}$, we have: $p_{\lambda} = \sum_{\mu \succeq \lambda} Aut(\mu) \overline{R}_{\lambda,\mu} m_{\mu}$ \cite[Prop.7.7.1]{EC2}. We use this to obtain a relation between $L_{\lambda,\mu}^n$ and $LP_{\lambda,\mu}^n$
 
\begin{proposition} \label{prop1} For partitions $\rho, \epsilon \vdash n$ we have  $LP_{\nu,\rho}^n =  \sum_{\lambda,\mu} \overline{R}_{\lambda\nu}\overline{R}_{\mu\rho}L_{\lambda,\mu}^n$, where  $\lambda$ and $\mu$ are refinements of $\nu$ and $\rho$ respectively.
\end{proposition}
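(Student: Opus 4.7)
The plan is to set up a direct bijection between $\mathcal{LP}^n_{\nu,\rho}$ and pairs consisting of an (unpartitioned) locally orientable hypermap of some type $(\lambda,\mu)$ (with $\lambda,\mu$ refining $\nu,\rho$) together with two independent pieces of additional data: a grouping of its white vertices into blocks whose degree-sums realize the parts of $\nu$, and an analogous grouping of its black vertices realizing $\rho$. Counting the ways to produce this extra data for a fixed hypermap will yield exactly the factors $\overline{R}_{\lambda,\nu}\overline{R}_{\mu,\rho}$, after which the claim follows by summing over $(\lambda,\mu)$.

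The first step is to unpack the stability condition in the definition of $\mathcal{LP}^n_{\nu,\rho}$. The orbits of the subgroup $\langle f_1,f_3\rangle$ acting on the $2n$ half-edges coincide with the half-edge neighborhoods of the white vertices of $f_3$: a white vertex of degree $d$ gives rise to two length-$d$ cycles of $f_3\circ f_1$, which the pairing $f_3$ glues into a single $\langle f_1,f_3\rangle$-orbit of size $2d$. Since $\pi_1$ is stable under both $f_1$ and $f_3$, each of its blocks is a disjoint union of such orbits, i.e.\ a disjoint union of white vertices. The half-type $\nu$ condition $|\pi_1^j|=2\nu_j$ then translates exactly into saying that the multiset of degree-sums of white vertices lying inside each block of $\pi_1$ equals the multiset of parts of $\nu$. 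The symmetric statement for $\pi_2$, $\rho$, and black vertices holds via the orbits of $\langle f_2,f_3\rangle$, using the preceding lemma to ensure that $\pi_1$ and $\pi_2$ indeed carry as many hat as non-hat indices in each block.

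The second step is the enumeration itself. Fix $\lambda,\mu\vdash n$ refining $\nu,\rho$ respectively, and fix a hypermap $f_3$ contributing to $L^n_{\lambda,\mu}$; its white vertices are labeled by $[\ell(\lambda)]$ and carry degrees $\lambda_1,\ldots,\lambda_{\ell(\lambda)}$. By step one, admissible choices of $\pi_1$ are in bijection with unordered set partitions of $[\ell(\lambda)]$ whose block-sums realize the parts of $\nu$, so by definition there are $\overline{R}_{\lambda,\nu}$ such $\pi_1$; independently there are $\overline{R}_{\mu,\rho}$ choices of $\pi_2$. Summing over the $L^n_{\lambda,\mu}$ hypermaps and then over compatible $(\lambda,\mu)$ gives the desired identity. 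The only point requiring genuine care is the orbit identification in step one; once that is secured, the rest is a clean counting argument factoring over the independent choices of $\pi_1$ and $\pi_2$.
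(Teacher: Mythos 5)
Your proposal is correct and follows essentially the same route as the paper: classify the partitioned hypermaps by the cycle types $(\lambda,\mu)$ of $(f_3\circ f_1, f_3\circ f_2)$ and observe that for each fixed hypermap the compatible pairs $(\pi_1,\pi_2)$ number $\overline{R}_{\lambda\nu}\overline{R}_{\mu\rho}$. The only difference is that you spell out, via the orbit structure of $\langle f_1,f_3\rangle$ and $\langle f_2,f_3\rangle$, the step the paper dismisses as ``easy to see.''
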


\begin{proof} 
Let $(f_3,\pi_1,\pi_2) \in \mathcal{LP}_{\nu,\rho}^n$. If $f_3\circ f_1 \in \mathcal{C}_{\lambda\lambda}$ and $f_3\circ f_2 \in \mathcal{C}_{\mu\mu}$  then by definition of the set partitions we have that $\lambda$ and $\mu$ are refinements of  $\frac12$-$\ty(\pi_1)=\nu$ and $\frac12$-$\ty(\pi_2)=\rho$ respectively. Thus, we can classify the elements of $\mathcal{LP}_{\nu,\rho}^n$ by the cycle types of $f_3\circ f_1$ and $f_3\circ f_2$.  \ie,  $\mathcal{LP}_{\nu,\rho}^n=\bigcup_{\lambda,\mu} \mathcal{LP}_{\nu,\rho}^n(\lambda,\mu)$ where
$$
\mathcal{LP}_{\nu,\rho}(\lambda,\mu) = \{ (f_3,\pi_1,\pi_2) \in \mathcal{LP}^n_{\nu,\rho} ~|~ (f_3\circ f_1,f_3\circ f_2) \in \mathcal{C}_{\lambda\lambda} \times \mathcal{C}_{\mu\mu}\}.
$$
If $LP_{\mu\rho}^n(\lambda,\mu) = |\mathcal{LP}_{\mu\rho}^n(\lambda,\mu)|$ then it is easy to see that $LP_{\mu,\rho}^n(\lambda,\mu)= \overline{R}_{\lambda\nu}\overline{R}_{\mu\rho} L_{\lambda\mu}^{n}$.

%Let $(\pi_1, \pi_2,\pi_3,\al_1,\al_2) \in \mathcal{C}(\rho,\delta,\epsilon)$. If $\al_1 \in \mathcal{C}_{\la}$, $\al_2 \in \mathcal{C}_{\mu}$ and $\al_3 = \al_2^{-1}\al_1^{-1}\ga_n \in \mathcal{C}_{\nu}$  then by the definition of the partitioned cacti we have that $\ty(\pi_1)=\rho \succeq \la$, $\ty(\pi_2)=\delta \succeq \mu$ and $\ty(\pi_3)=\epsilon \succeq \nu$. Thus, if we further refine $C(\rho,\delta,\epsilon)$ by the cycle types of the permutations, {\em i.e.} if
%$$
%\mathcal{C}_{\la,\mu,\nu}(\rho,\delta,\epsilon) = \{ (\pi_1,\pi_2,\pi_3,\al_1,\al_2) \in \mathcal{C}(\rho,\delta,\epsilon) ~|~ (\al_1,\al_2,\al_2^{-1}\al_1^{-1}\ga_n) \in \mathcal{C}_{\la} \times \mathcal{C}_{\mu} \times \mathcal{C}_{\nu}\},
%$$
%then $\mathcal{C}(\rho,\delta,\epsilon) = \bigcup_{\la \preceq \rho, \mu \preceq \delta, \nu \preceq \epsilon} \mathcal{C}_{\la,\mu,\nu}(\rho,\delta,\epsilon) $ where the union is disjoint. Finally, if $C_{\la,\mu,\nu}(\rho,\delta,\epsilon) = |\mathcal{C}_{\la,\mu,\nu}(\rho,\delta,\epsilon)|$ then it is easy to see that $C_{\la,\mu,\nu}(\rho,\delta,\epsilon)=  \ov{R}_{\la\rho}\ov{R}_{\mu\delta}\ov{R}_{\nu\epsilon}k_{\la,\mu,\nu}^n$. 
\end{proof}

By the change of basis equation between $p_{\lambda}$ and $m_{\lambda}$, this immediately relates the generating series $\Psi^{n}$ and the generating series for $LP^n_{\lambda,\mu}$ in  monomial symmetric functions, 
\begin{equation} \label{lem:lp}
\sum_{\lambda,\mu \vdash n}L_{\lambda, \mu}^{n}p_{\lambda}({\bf x})p_{\mu}({\bf y}) = \sum_{\lambda,\mu \vdash n}Aut(\lambda)Aut(\mu)LP_{\lambda, \mu}^{n}m_{\lambda}({\bf x})m_{\mu}({\bf y}).
\end{equation}

\begin{definition}
Let  $\mathcal{LP}({\bf A})$ be the set, with cardinality $LP({\bf A})$, of locally orientable partitioned hypermaps with $n$ edges where ${\bf A}= (P,P',Q,Q')$ are tridimensional arrays such that for $i,j,k \geq 0$:
\begin{compactitem}
\item $P_{ijk}$ (resp. $P'_{ijk}$)  is the number of blocks of $\pi_1$ of half size $i$ such that: 
\begin{compactenum}
\item[(i)] either $1$ belongs to the block or its maximum {\bf non-hat} number is paired to a {\bf hat} number by $f_3$  (resp. blocks of $\pi_1$ not containing $1$ such that the maximum {\bf non-hat} number of the block is paired to a {\bf non-hat} number by $f_3$), 
\item[(ii)] the block contains $j$ pairs $\{t,f_3(t)\}$ where $t$ is the maximum {\bf hat} number of a block of $\pi_2$ such that $f_3(t)$ is also a {\bf hat} number, and, 
\item[(iii)] the block contains as a whole $j+k$ pairs $\{u,f_3(u)\}$ where both $u$ and $f_3(u)$ are {\bf non-hat} numbers.
\end{compactenum}

\item $Q_{ijk}$ (resp. $Q'_{ijk}$) is the number of blocks of $\pi_2$ of half size $i$ such that:
\begin{compactenum}
\item[(i)] the maximum {\bf hat} number of the block is paired to a {\bf non-hat} (resp. {\bf hat}) number by $f_3$,
\item[(ii)] the block contains $j$ pairs $\{t,f_3(t)\}$ where $t$ is the maximum {\bf non-hat} number of a block of $\pi_1$ non containing $1$ and such that $f_3(t)$ is also a {\bf non-hat} number, and
\item[(iii)] the block contains as a whole $j+k$ pairs $\{u,f_3(u)\}$ where both $u$ and $f_3(u)$ are {\bf hat} numbers.
\end{compactenum}
\end{compactitem}
\end{definition}
As a direct consequence, for $LP({\bf A})$ to be non zero ${\bf A}$ has to check the conditions of Theorem \ref{thm:main}. Furthermore:

\begin{equation}\label{eq:lpa}{LP}_{\lambda,\mu}^{n} = \sum_{{\bf A} \in M(\lambda,\mu)}LP({\bf A})\end{equation}
\begin{example}
The partitioned hypermap on Figure \ref{fig:example} belongs to $\mathcal{LP}({\bf A})$ for $P = E_{4,1,0} + E_{3,0,1}+ E_{2,0,0}$, $P' = E_{3,0,1}$, $Q= E_{5,0,1}+E_{4,1,0}$, $Q' = E_{3,0,1}$ where $E_{t,u,v}$, the elementary tridimensional array with $E_{t,u,v}=1$ and $0$ elsewhere.
\end{example}
One can notice that a hypermap is orientable if and only if $f_3(t)$ is a hat number when $t$ is a non hat number (we go through each edge of the map in both directions and there are no changes of direction during the map traversal). As a result, a hypermap in $\mathcal{LP}({\bf A})$ is orientable if and only if:
\begin{compactitem}
\item $p'=q'=r=0$ and
\item $P_{ijk} = Q_{ijk}=0$ if $j>0$ and/or $k>0$.
\end{compactitem}
In this particular case, we have the following values for $\mathcal{N}({\bf A})$ and ${\bf A!}$ :
\begin{compactitem}
\item $\mathcal{N}({\bf A}) = \sum_{i,i,k}iP_{ijk}=\sum_{t}i n_i(\lambda)=n$
\item ${\bf A!}=\prod_{i}P_{i,0,0}!Q_{i,0,0}!=Aut(\lambda)Aut(\mu)$
\end{compactitem}
If we denote $c_{\lambda,\mu}^n$ the number of such orientable maps, by Theorem \ref{thm:main}, Equation \eqref{eq:lpa}, Lemma \ref{lem:lp} and Relation \eqref{eq:lb} we recover the following result from \cite[Thm. 1]{MV}:

\begin{theorem*}
\begin{equation} \label{eq:orientable}
\sum_{\lambda,\mu \vdash n} c_{\lambda,\mu}^np_\lambda p_\mu = n\sum_{\lambda,\mu \vdash n}\frac{(n-\ell(\lambda))!(n-\ell(\mu))!}{(n+1-\ell(\lambda)-\ell(\mu))!}m_\lambda m_\mu
\end{equation}
\end{theorem*}
Note that $\{c_{\lambda,\mu}^n\}$ are as well connection coefficients of the symmetric group, that count the number of ordered factorizations $w_1\cdot w_2$ of the long cycle $(1,2,\ldots,n)$ in $S_n$ where $w_1\in \mathcal{C}_{\la}$ and $w_2\in \mathcal{C}_{\mu}$. 

\subsection{Permuted forests and reformulation of the main theorem}
We show that locally orientable partitioned hypermaps admit a nice bijective interpretation in terms of some recursive forests defined as follows:
\begin{definition}[Rooted bicolored forests of degree {\bf A}]\label{def:forest} In what follow we consider the set $\mathcal{F}({\bf A})$ of permuted rooted forests composed of:
\begin{compactitem}
\item a bicolored identified ordered {\bf seed tree} with a white root vertex,
\item other bicolored ordered trees, called {\bf non-seed trees}  with either a white or a black root vertex,
\item each vertex of the forest has three kinds of ordered descendants : {\bf tree-edges} or edges (connecting a white and a black vertex), {\bf thorns} (halves of tree-edges connected to only one vertex) and {\bf loops} connecting a vertex to itself. The two {\em extremities} of the loop are part of the ordered set of descendants of the incident vertex and therefore the loop can be intersected by thorns, edges and other loops as well.
\end{compactitem}
The forests in $\mathcal{F}({\bf A})$ also have the following properties:
\begin{compactenum}[(i)]
\item the root vertices of the non-seed trees have at least one descending loop with one extremity being the rightmost descendant of the considered vertex,
\item the total number of thorns (resp. loops) connected to the white vertices is equal to the number of thorns (resp. loops) connected to the black vertices,
%\item the total number of loops connected to the white vertices is equal to the number of loops connected to the black ones,
\item there is a bijection between thorns connected to white vertices and the thorns connected to black vertices. The bijection between thorns will be encoded by assigning the same symbolic {\em latin} labels $\{a,b,c,\ldots\}$ to thorns associated by this bijection,
\item there is a mapping that associates to each loop incident to a white (resp. black) vertex, a black (resp. white) vertex ${\rm v}$  such that the number of white (resp. black) loops associated to a fixed black (resp. white) vertex ${\rm v}$ is equal to its number of incident loops. We will use symbolic {\em greek} labels $\{\alpha,\beta,\ldots\}$ to associate loops with vertices except for the maximal loop of a root vertex ${\rm r}$ of the non-seed trees. In this case, we draw an arrow (\begin{tikzpicture} \draw[very thick,densely dashed] [->] (0,0) to (0.8,0);   \end{tikzpicture}) outgoing from the root vertex ${\rm r}$ and incoming to the vertex associated with the loop. Arrows are non ordered, and
\item the ascendant/descendant structure defined by the edges of the forest and the arrows defined above is a tree structure rooted in the root of the seed tree.
\end{compactenum}

Finally the degree ${\bf A}$ of the forest is given in the following way:
\begin{compactenum}
\item[(vii)] $P_{ijk}$ (resp $P'_{ijk}$) counts the number of non root white vertices (including the root of the seed tree) (resp. white root vertices excluding the root of the seed tree) of degree $i$, with $j$ incoming arrows and a total of $j+k$ loops.
\item[(viii)] $Q_{ijk}$ (resp $Q'_{ijk}$) counts the number of non root black vertices (resp. black root vertices) of degree $i$, with $j$ incoming arrows and total $j+k$ loops.
\end{compactenum}
\end{definition}

\begin{example}
As an example, Figure \ref{fig:exrecons} depicts two permuted forests. The one on the left is of degree ${\bf A}=(P,P',Q,Q')$ for $P = E_{4,1,0} + E_{3,0,1}+E_{2,0,0}$, $P' = E_{3,0,1}$, $Q= E_{5,0,1} + E_{4,1,0}$, and  $Q' = E_{3,0,1}$ while the one on the right is of degree ${\bf A^{(2)}}=(P^{(2)},P'^{(2)},Q^{(2)},Q'^{(2)})$ for $P^{(2)}= E_{7,0,3} + E_{4,1,0}$, $P'^{(2)}=\{0\}_{i,j,k}$, $Q^{(2)}=E_{7,0,2}$, and $Q'^{(2)}=E_{4,0,2}$.

\begin{figure}[htbp]
  \begin{center}
    \includegraphics[width=60mm]{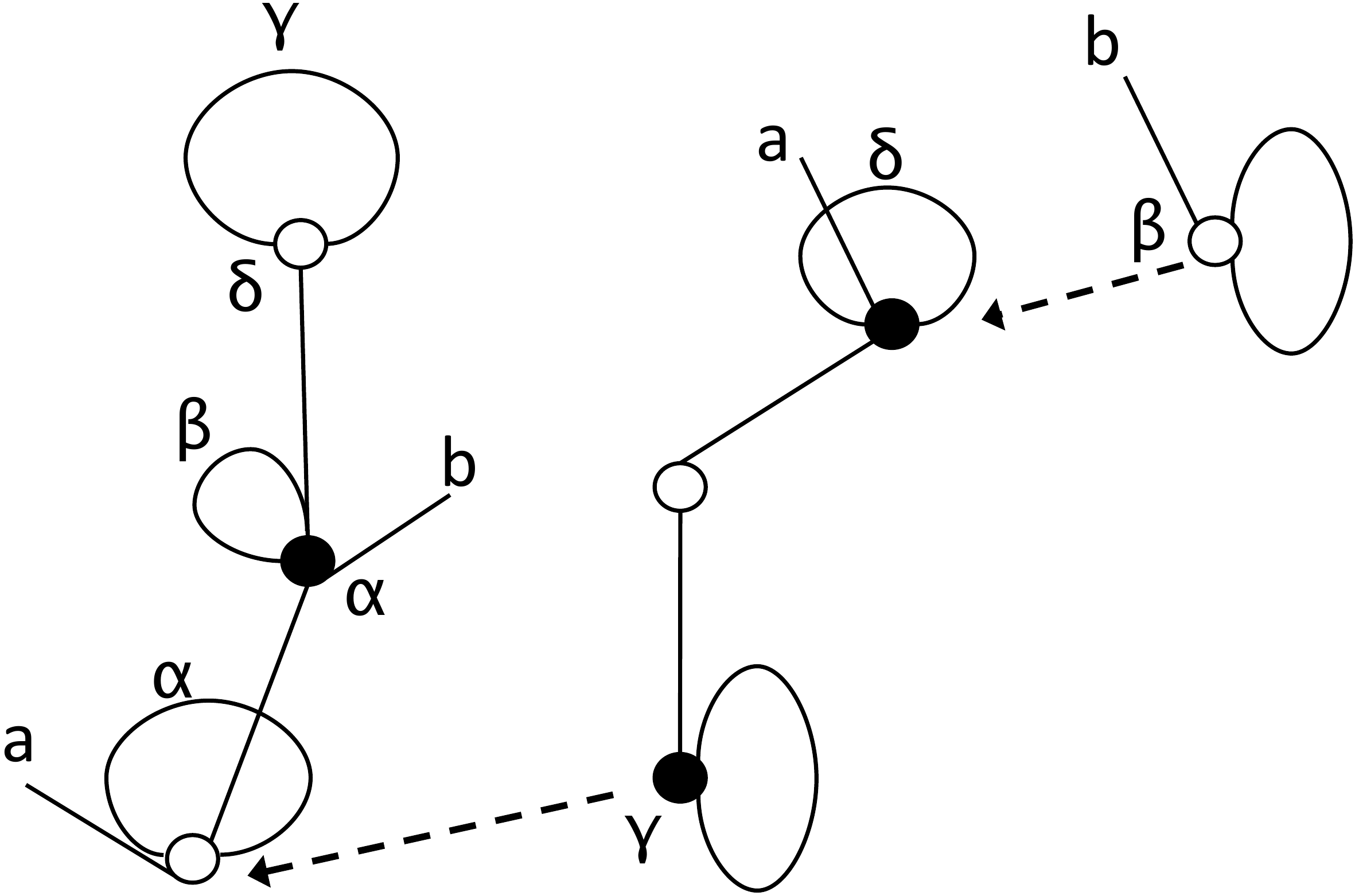}\hspace{15mm}
    \includegraphics[width=30mm]{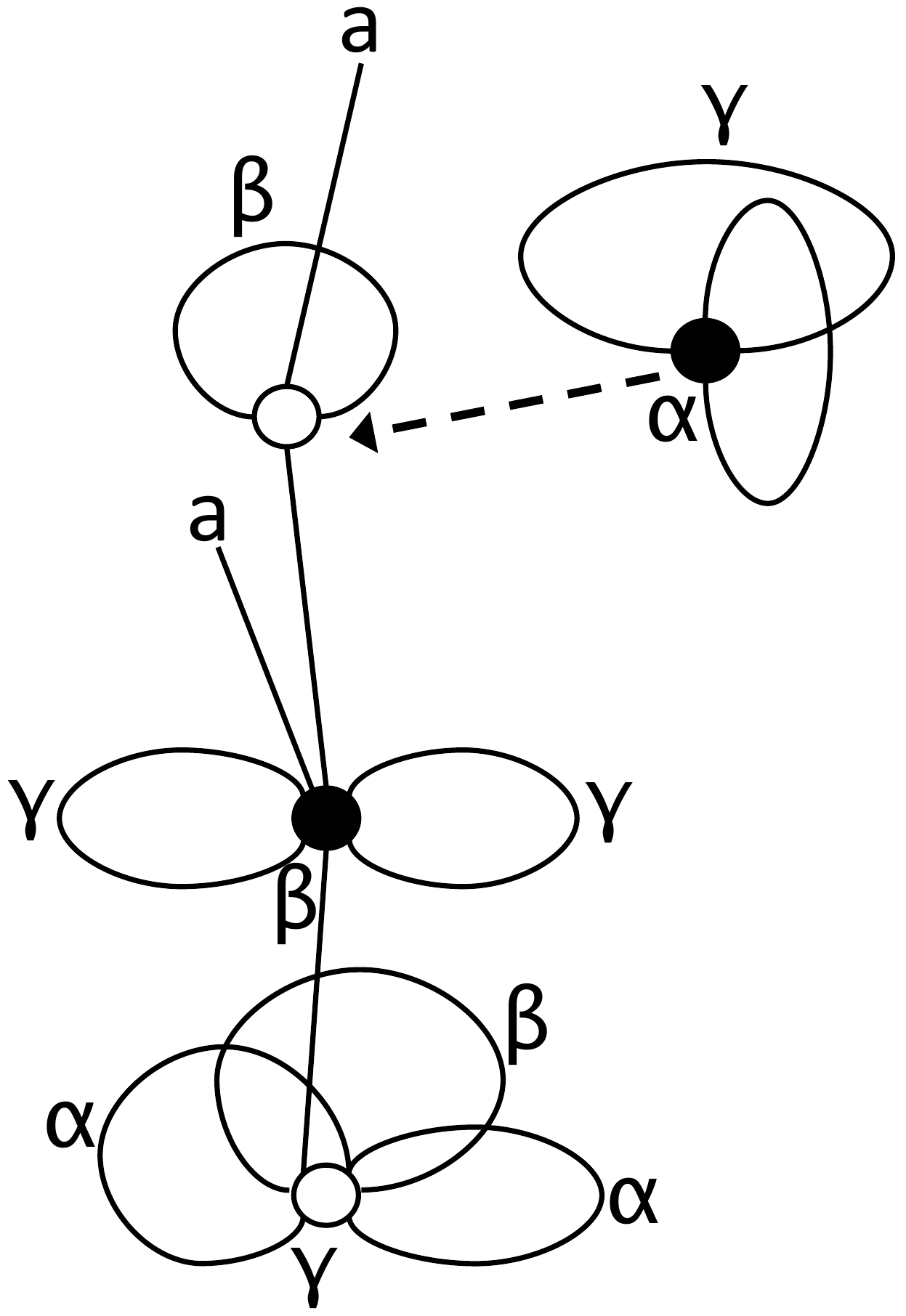}
    \caption{Two Permuted Forests}
    \label{fig:exrecons}
  \end{center}
\end{figure}
\end{example}

\begin{lemma}
Using the Lagrange theorem for implicit functions one can show:
\footnotesize
\begin{align*}
F({\bf A}) = \frac{\mathcal{N}({\bf A})}{{\bf A!}}\frac{(n-q-2r)!(n-p-2r)!}{(n+1-p-q-2r)!}\frac{p'!q'!\left ( r - p' \right)!\left ( r- q' \right)!}{2^{2r-p'-q'}}\prod_{i,j,k}{\binom{i-1}{j,k,j+k}}^{(P+Q)_{ijk}}{\binom{i-1}{j,k,j+k-1}}^{(P'+Q')_{ijk}} 
\end{align*}
\normalsize
\end{lemma}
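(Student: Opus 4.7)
The approach is to decompose the enumeration of $\mathcal{F}(\mathbf{A})$ into three essentially independent layers: (a) the local arrangement of the ordered descendants at each vertex; (b) the global combinatorial matchings encoded by the Latin labels on thorns and by the Greek associations of arrow-free loops to opposite-color vertices; and (c) the meta-tree structure obtained by collapsing each component-tree to a node and keeping only the arrows, which is rooted at the seed tree and which I count by multivariate Lagrange inversion.

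\textbf{Local arrangements.} I would first extract the product of multinomials. A non-root vertex of type $(i,j,k)$ has one of its $i$ descendant slots consumed by the edge to its parent, leaving $i-1$ positions; among these, $j$ are primary slots for arrow-bearing loop extremities, $k$ are primary slots for arrow-free loop extremities, and $j+k$ host the matching secondary extremities. This gives $\binom{i-1}{j,k,j+k}$. For a non-seed root vertex, condition (i) fixes the rightmost slot as a secondary extremity of the top loop, removing one secondary position and yielding $\binom{i-1}{j,k,j+k-1}$. Taking the product over all vertices supplies exactly the product appearing in the claim.

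\textbf{Matchings and Lagrange inversion for the skeleton.} The Latin bijection on the $n-p-2r$ (respectively $n-q-2r$) thorns on each side yields one factorial; the Greek assignment of the $r-p'$ (respectively $r-q'$) arrow-free loops to opposite-color vertices, with multiplicities prescribed by $\mathbf{A}$, contributes additional constrained factorials; and the denominator $2^{2r-p'-q'}$ cancels the two-fold redundancy of the extremities of arrow-free loops (an arrow-bearing loop is canonically oriented by its arrow). For the skeleton I would introduce formal series $W,B$ for white- and black-rooted non-seed trees, with formal variables marking each type $(i,j,k)$, satisfying implicit equations $W=\phi_W(W,B)$ and $B=\phi_B(W,B)$ whose kernels package the local descendant weights and the choices of arrow targets. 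The seed tree is a pointed variant of $W$ whose top loop is not replaced by an outgoing arrow, so $F(\mathbf{A})$ is a coefficient extraction from a weighted product of $W$, $B$ and the seed series. Good's multivariate Lagrange inversion reduces this coefficient to a Jacobian expression from which the factors $\frac{(n-q-2r)!(n-p-2r)!}{(n+1-p-q-2r)!}$, $\frac{p'!q'!(r-p')!(r-q')!}{2^{2r-p'-q'}}$, and the symmetrization factor $1/\mathbf{A}!$ drop out by routine coefficient extraction.

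\textbf{Main obstacle.} The most delicate part is identifying the prefactor $\mathcal{N}(\mathbf{A})$. Because the seed tree is the unique component not pointed to by an arrow, its root plays an asymmetric role that simultaneously couples all four arrays $P,P',Q,Q'$; the dichotomy $q'=0$ versus $q'\ne 0$ in the statement reflects whether there are any non-seed black roots available as arrow targets. I expect $\mathcal{N}(\mathbf{A})$ to emerge from evaluating the Jacobian determinant of Good's formula on our explicit $\phi_W,\phi_B$: the outer sum $\sum_{t,u,v} tP_{tuv}/(t-2u-2v)$ should correspond to picking a marked descendant of the seed root and propagating it through the inversion kernel, while the bracketed terms track how the arrows are distributed across the four vertex classes. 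Verifying by direct algebra that the Jacobian collapses exactly to the stated $\mathcal{N}(\mathbf{A})$, and that the $q'=0$ boundary case is recovered as a consistent limit of the $q'\ne 0$ formula, will be the main source of technical work.
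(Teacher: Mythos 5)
The paper offers no proof of this lemma beyond the phrase ``using the Lagrange theorem for implicit functions,'' so there is no reference argument to compare against; judged on its own terms, your outline identifies the right ingredients (local multinomials per vertex, global matchings for thorns and loops, multivariate Lagrange inversion on the arrow-skeleton), and your explanations of the product of multinomials and of the divisor $2^{2r-p'-q'}$ are plausible. But the proposal is a plan rather than a proof, and it contains at least one concrete error in the bookkeeping. By property (ii) of Definition \ref{def:forest} the numbers of thorns on the white and black sides must be \emph{equal}; indeed both equal $n+1-p-q-2r$, since the $n$ white half-edges decompose into $2r$ loop extremities, $p-1$ edges to black parents, $q$ edges to black children, and thorns (and symmetrically on the black side). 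They are therefore not $n-p-2r$ and $n-q-2r$ as you assert, and the Latin bijection alone would contribute $(n+1-p-q-2r)!$ --- which appears in the \emph{denominator} of the claimed formula, not the numerator. So the factor $\frac{(n-q-2r)!\,(n-p-2r)!}{(n+1-p-q-2r)!}$ cannot come from ``the Latin bijection yields one factorial''; it must be produced jointly by the thorn matching and the Lagrange-inversion count of how subtrees and thorns interleave, and until that computation is actually carried out these factors are unaccounted for. A similar caution applies to your reading of the multinomial: the index $j$ counts \emph{incoming} arrows, i.e.\ maximal loops of opposite-colour root vertices associated to the given vertex, so the split of the vertex's own $j+k$ loop positions into groups of sizes $j$, $k$ and $j+k$ needs a genuine justification rather than the ``arrow-bearing versus arrow-free'' gloss.

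More seriously, you explicitly defer the derivation of $\mathcal{N}(\mathbf{A})$ --- the evaluation of Good's Jacobian on your (never written down) system $W=\phi_W(W,B)$, $B=\phi_B(W,B)$, including the dichotomy $q'=0$ versus $q'\neq 0$ --- calling it ``the main source of technical work.'' That computation \emph{is} the lemma: every factor you label as dropping out ``by routine coefficient extraction'' is precisely where the content lies, and the intricate form of $\mathcal{N}(\mathbf{A})$ (with its denominators $t-2u-2v$, $p'$ and $n-q-2r$, and the boundary cases recorded in Appendix \ref{app}) is strong evidence that the extraction is not routine. To turn this into a proof you must write $\phi_W$ and $\phi_B$ explicitly, state exactly which coefficient of which generating series equals $F(\mathbf{A})$, apply the multivariate Lagrange formula, and carry the resulting sum through to the stated closed form, checking the degenerate cases along the way.
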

{\bf Reformulation of the main theorem}\\
The next sections are dedicated to the proof of the following stronger result that will imply Theorem \eqref{thm:main}:
\begin{theorem}
There is a bijection $\Theta_{\bf A}:\mathcal{LP}({\bf A}) \to \mathcal{F}({\bf A}$) and so $LP({\bf A}) = F({\bf A})$.
\end{theorem}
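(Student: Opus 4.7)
The plan is to construct $\Theta_{\bf A}$ and its inverse directly, reading off both sides of the data in a structure-preserving way. First I would identify each block of $\pi_1$ with a white vertex and each block of $\pi_2$ with a black vertex of the forest; the block of $\pi_1$ containing $1$ becomes the root of the seed tree. The four-way split of the arrays ($P$ versus $P'$ on the $\pi_1$ side, $Q$ versus $Q'$ on the $\pi_2$ side) corresponds exactly to the distinction between seed/non-root white vertices and non-seed white roots (and analogously for black), since in both the hypermap and forest descriptions the classification is governed by whether the maximum non-hat (or hat) element of the block is paired via $f_3$ to a hat or to a non-hat.

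For the descendants, I would read each $f_3$-pair of the partitioned hypermap and translate it to the forest as follows: a non-hat-non-hat pair lying in a block of $\pi_1$ becomes a loop at the corresponding white vertex (both extremities entering the ordered descendant list there); a hat-hat pair in a block of $\pi_2$ becomes a loop at the corresponding black vertex; a hat-non-hat pair becomes either a tree-edge connecting the white and black blocks it lies in, or a pair of matching thorns (one at each of those blocks, carrying a shared latin label). This yields $i = 2(j+k) + (\text{thorns} + \text{tree-edges})$ ordered descendants at every vertex of half-size $i$ with $j+k$ loops, matching the numerology of Definition~\ref{def:forest}. The greek label attached to each white loop would be set to be the identifier of the unique black vertex whose block (in $\pi_2$) contains the associated non-hat-non-hat pair (and symmetrically for black loops), and this correspondence already realizes property~(iv) of that definition since the number of white loops assigned to any black vertex $v$ equals $v$'s own count of loops.

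The substantive step is to verify property~(v), namely that the descent structure formed by tree-edges together with the arrows (the arrow at each non-seed root being its rightmost loop, which by construction contains the maximum element of that block) is a tree rooted at the seed. This is where unicellularity $\nu=(n)$ enters: because $f_1\circ f_2$ is a single long cycle, the half-edges admit a canonical total order, and one may argue by induction on $n$ via a ``remove the pair of the maximum element'' procedure. At each step one checks that the removed object corresponds to a leaf, a thorn pair, a loop extremity, or an entire non-seed subtree attached via its arrow, and that the reduced data is again a locally orientable partitioned hypermap of a related type ${\bf A}'$. The numerical constraints $r=\sum(j+k)(P+P')=\sum(j+k)(Q+Q')$, $q'=\sum j(P+P')$, $p'=\sum j(Q+Q')$ imposed in Theorem~\ref{thm:main} are precisely the conservation laws enforced by this recursion, guaranteeing that the numbers of arrows and of paired thorn-labels balance between the two colors.

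I expect the main obstacle to be the combinatorial bookkeeping in the inductive step: one must distinguish the four types of ``maximum'' pairs (whether the two members are hat or non-hat, and whether the non-hat one is the block maximum or not) and match each case to the correct reverse operation on the forest (pruning a tree-edge leaf, removing a thorn-pair, detaching a non-seed subtree at its arrow, or shrinking a loop). Once these cases are handled, the inverse $\Theta_{\bf A}^{-1}$ is obtained by running the recursion backwards: greek labels tell one how to reassemble non-hat-non-hat pairs into $\pi_2$ blocks, latin thorn labels reassemble hat-non-hat pairs across the two partitions, and the ordered descendants recover the internal $f_3$-structure of each block. Combined with the Lagrange-inversion evaluation $F({\bf A})$ of the preceding lemma, the resulting identity $LP({\bf A})=F({\bf A})$ then yields Theorem~\ref{thm:main} via~\eqref{eq:lpa} and~\eqref{lem:lp}.
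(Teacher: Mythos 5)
Your forward construction matches the paper's: blocks of $\pi_1,\pi_2$ become white and black vertices, $f_3$-pairs internal to a block become loops, mixed pairs become edges or matched thorns, and the $P/P'/Q/Q'$ split is governed by how the block maximum is paired. However, two essential pieces are missing. First, you defer the tree property (v) to an uncarried-out induction on $n$ by ``removing the pair of the maximum element''; this is delicate because $f_1$ and $f_2$ are fixed specific pairings on $[n]\cup[\widehat{n}]$ and deleting elements does not obviously leave a partitioned hypermap of the same kind. The paper instead proves this directly (Lemma \ref{lem:tree}) by a monotonicity argument: if $b_1$ is a descendant and $b_2$ the ascendant of a white vertex $w$, stability of the blocks under $f_1$ and $f_2$ forces $m^{b_1}<f_1(m^{b_1})\le m^w<f_2(m^w)\le m^{b_2}$, so maxima strictly increase toward the root and no cycles can form. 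You should either supply this argument or actually carry out your induction.

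Second, and more seriously, your description of $\Theta_{\bf A}^{-1}$ does not engage with the real difficulty: the forest $\widetilde{F}$ carries \emph{no} integer labels (they are all erased; only symbolic latin and greek labels remain), so the inverse must reconstruct the actual elements of $[n]\cup[\widehat{n}]$ sitting at every thorn, loop extremity and edge end. Saying that ``greek labels tell one how to reassemble non-hat--non-hat pairs into $\pi_2$ blocks'' is not enough --- the greek label only says \emph{which} black vertex the pair lies in, not \emph{which integers} form the pair. The heart of the paper's proof is the sequential recovery of $1,\widehat{1},2,\widehat{2},\dots$: label $1$ is the leftmost descendant of the seed root; given the position of $i$, the position of $\widehat{i}=f_2(i)$ is the leftmost still-unlabelled descendant of the black vertex determined by one of four cases (matching thorn, greek-labelled loop, unlabelled maximal loop via its arrow, or edge), using that $\pi_2$ is stable under $f_2$ and $f_3$ and that labels increase clockwise; then $i+1=f_1(\widehat{i}\,)$ is found symmetrically. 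Injectivity is that this recovery is forced at every step, and surjectivity is that it never gets stuck --- which requires Lemma \ref{lem:loops} (loops at a vertex $=$ incoming arrows plus loops carrying its greek label, a fact you also assert without the arrow correction) together with the tree structure to show each vertex is visited exactly its degree many times, with the seed root exhausted last. Without this recovery argument your proposal establishes neither injectivity nor surjectivity.
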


\section{Bijection between locally orientable unicellular partitioned  hypermaps and permuted forests}
We proceed with the description of the bijective mapping $\Theta_{\bf A}$ between locally orientable partitioned hypermaps and permuted forests of degree ${\bf A}$. Let $(f_3,\pi_1,\pi_2)$ be a partitioned hypermap in $\mathcal{LP}({\bf A})$. The
first step is to define a set of white and black vertices with labeled ordered half edges such that:
\begin{compactitem}
\item each white vertex is associated to a block of $\pi_1$ and each black vertex is associated to a block of $\pi_2$,
\item the number of half edges connected to a vertex is half the cardinality of the associated block, and
\item the half edges connected to the white (resp. black) vertices are labeled with the non hat (resp. hat) integers in the associated blocks so that moving clockwise around the vertices the integers are sorted in increasing order.
\end{compactitem}
Then we define an ascendant/descendant structure on the vertices. A black vertex $b$ is the descendant of a white one $w$ if the maximum half edge label of $b$ belongs to the block of $\pi_1$ associated to $w$. Similar rules apply to define the ascendant of each white vertex except the one containing the half edge with label $1$.

If black vertex $b^d$ (resp. white vertex $w^d$) is a descendant of white vertex $w^a$ (resp. black vertex $b^a$) and has maximum half edge label $m$ such that $f_3(m)$ is the label of a half edge of $w^a$ (resp. $b^a$), i.e. $f_3(m^b)$ is a non hat (resp. hat) number, then we connect these two half edges to form an edge. Otherwise $f_3(m)$ is a hat (resp. non hat) number and we draw an arrow (\begin{tikzpicture} \draw[very thick,densely dashed] [->] (0,0) to (0.8,0);   \end{tikzpicture}) between the two vertices. Note that descending edges are ordered but arrows are not.\\ 
\begin{lemma} \label{lem:tree}The above construction defines a tree structure rooted in the white vertex with half edge $1$. \end{lemma}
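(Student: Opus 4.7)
My plan is to show that the ascendancy defined by the construction gives each non-root vertex a unique parent — producing $|V|-1$ arrows — and that every ascent chain terminates at the designated root. Together these force the ascendancy graph to be a tree.

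Well-definedness is immediate: for a black vertex $b$ with block $\pi_2^k$ the parent is the unique white vertex $w_j$ whose block $\pi_1^j$ contains $\max(\pi_2^k\cap[\widehat{n}])$; for a non-root white vertex $w_j$ the parent is the unique black vertex $b_k$ with $\max(\pi_1^j\cap[n])\in \pi_2^k$. Summing over non-root vertices gives exactly $|V|-1$ parent arrows, which is the right edge count for a tree; thus it remains only to rule out cycles by showing every ascent chain reaches the root.

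The crux is a monotone potential argument using the $f_i$-stability of $\pi_i$. Since $f_1$ pairs $m$ with $\widehat{m-1}$ for $m\geq 2$ and $1$ with $\widehat{n}$, each $\pi_1$-block is a disjoint union of $f_1$-pairs; in particular $\widehat{\ell}\in\pi_1^j$ with $\ell<n$ forces $\ell+1\in \pi_1^j$, and $\widehat{n}\in\pi_1^j$ forces $1\in\pi_1^j$. Similarly, $f_2$-stability gives $m\in\pi_2^k \iff \widehat{m}\in\pi_2^k$. Now start at a non-root white $w_j$ and set $m:=\max(\pi_1^j\cap[n])$; since $1\notin\pi_1^j$ and hence (by $f_1$-stability) $\widehat{n}\notin\pi_1^j$, the equal number of hats and non-hats in a block forces $m\geq 2$. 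The parent $b_k$ contains $m$ and therefore $\widehat{m}$, so $m':=\max\{\ell:\widehat{\ell}\in\pi_2^k\}\geq m$. The grandparent $w_{j'}$ is determined by $\widehat{m'}\in\pi_1^{j'}$: if $m'=n$ then $1\in\pi_1^{j'}$ and $w_{j'}$ is the root, otherwise $m'+1\in\pi_1^{j'}$, forcing $\max(\pi_1^{j'}\cap[n])\geq m'+1>m$.

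Consequently two ascent steps either reach the root or strictly increase the bounded potential $\max(\pi_1^{\bullet}\cap[n])\in\{2,\ldots,n\}$, so every chain terminates at the root after finitely many steps; a chain starting at a black vertex lands on a white vertex after one step and the same argument applies. This rules out cycles and, together with the $|V|-1$ arrow count, makes the ascendancy graph a tree rooted at the white vertex containing $1$. The main obstacle is spotting the right potential: white-to-black and black-to-white steps are asymmetric, so no single-step monotone quantity works; the trick is to couple the $f_2$-pair $\{m,\widehat{m}\}$ inside the child black block with the $f_1$-pair $\{\widehat{m'},m'+1\}$ (or $\{\widehat{n},1\}$) forced inside the grandparent white block.
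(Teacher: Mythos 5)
Your proof is correct and follows essentially the same route as the paper: both arguments establish that the maximum label strictly increases along ascent chains (using stability of $\pi_1$ under $f_1$ and of $\pi_2$ under $f_2$, and $f_1(\widehat{n})=1$ to identify the root), which rules out cycles and forces every chain to terminate at the white vertex containing $1$. The only cosmetic difference is that you track the potential $\max(\pi_1^{\bullet}\cap[n])$ over two steps at a time, whereas the paper compares maxima one step at a time in the interleaved order $1<\widehat{1}<2<\cdots<n<\widehat{n}$.
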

\begin{proof}
Let black vertices $b_1$ and $b_2$ associated to blocks $\pi_2^{b_1}$ and $\pi_2^{b_2}$ be respectively a descendant and the ascendant of white vertex $w$ associated to $\pi_1^w$. We denote by $m^{b_1}$, $m^{b_2}$ and $m^{w}$ their respective maximum half edge labels (hat, hat, and non hat) and assume $m^{b_1} \neq \widehat{n}$. As $\pi_1^{w}$ is stable by $f_1$, then $f_1(m^{b_1})$ is a non hat number in $\pi_1^w$ not equal to $1$. It follows $m^{b_1} < f_1(m^{b_1}) \leq m^w < f_2(m^w)$. Then as  $\pi_2^{b_2}$ is stable by $f_2$, it contains $f_2(m^w)$ and $f_2(m^w) \leq m^{b_2}$. Putting everything together yields $m^{b_1} < m^{b_2}$. In a similar fashion, assume white vertices $w_1$ and $w_2$ are descendant and ascendant of black vertex $b$. If we note $m^{w_1}$, $m^{w_2}$ and $m^{b}$ their maximum half edge labels (non hat, non hat, and hat) with $m^{b} \neq \widehat{n}$, one can show that $m^{w_1} < m^{w_2}$. Finally, as $f_1(\widehat{n}) =1$, the black vertex with maximum half edge $\widehat{n}$ is descendant of the white vertex containing the half edge label $1$. \end{proof}
\begin{example}
Using the hypermap of Figure \ref{fig:example} we get the set of vertices and ascendant/descendant structure as described on Figure \ref{fig:verttree}.
\begin{figure}[htbp]
  \begin{center}
    \includegraphics[width=0.9\textwidth]{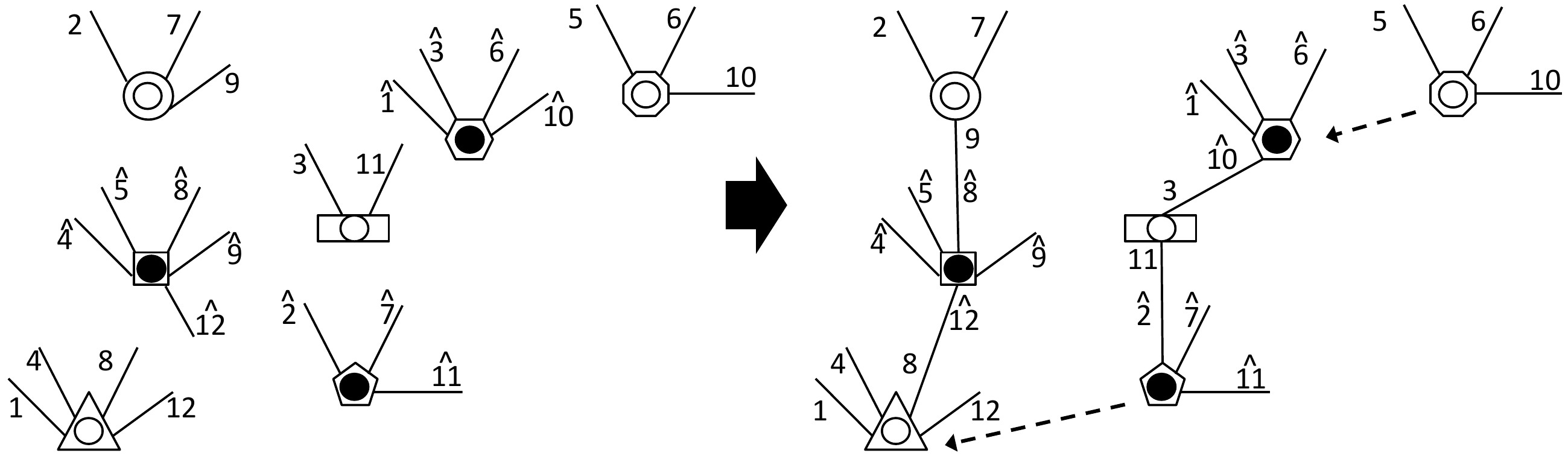}
    \caption{Construction of the ascendant/descendant structure}
    \label{fig:verttree}
  \end{center}
\end{figure}
\end{example}
Next, we proceed by linking half edges connected to the same vertex if their labels are paired by $f_3$ to form loops. If $i$ and $f_3(i)$ are the labels of a loop connected to a white (resp. black) vertex such that neither $i$ nor $f_3(i)$ are maximum labels (except if the vertex is the root), we assign the same label to the loop and the black (resp. white) vertex associated to the block of $\pi_2$ (resp. $\pi_1$) also containing $i$ and $f_3(i)$. As we assign at most one label from $\{\alpha,\beta,\ldots\}$ to a given vertex, several loops may share the same label. 
\begin{lemma}\label{lem:loops} The number of loops connected to the vertex labeled $\alpha$ is equal to its number of incoming arrows plus the number of loops  labeled $\alpha$ incident to other vertices in the forest.
 \end{lemma}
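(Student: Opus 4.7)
The plan is to show both sides count the same set of $f_3$-pairs inside $v$'s block, viewed from two sides. Suppose for concreteness that $v$ is a black vertex with associated block $\pi_2^b$; the white case will be symmetric after swapping the roles of $\pi_1,\pi_2$ and of hat/non-hat throughout. Since $\pi_2$ is $f_3$-stable, the block $\pi_2^b$ decomposes into $f_3$-pairs of three kinds: HH (both hat), NN (both non-hat), and HN (mixed). The preceding lemma on equal hat/non-hat counts in every block forces $2\,\#\mathrm{HH}+\#\mathrm{HN}=2\,\#\mathrm{NN}+\#\mathrm{HN}$, so $\#\mathrm{HH}=\#\mathrm{NN}$.

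The loops incident to $v$ correspond exactly to HH pairs, since the half-edges of $v$ are labeled by the hats of $\pi_2^b$ and a loop is an $f_3$-pair of half-edges at the same vertex. The main step is then to interpret $\#\mathrm{NN}$ as (number of loops labeled $\alpha$ at vertices other than $v$) $+$ (number of arrows into $v$). Take an NN pair $\{j,f_3(j)\}\subseteq\pi_2^b$. By $f_3$-stability of $\pi_1$, both elements lie in a common block $\pi_1^w$, so the pair represents a loop at the white vertex $w$. By the labeling rule, the associated black vertex of this loop is the one whose $\pi_2$-block contains the pair, namely $v$. If the pair does not involve the maximum label of a non-seed root, the construction tags it with the greek label of $v$, i.e., $\alpha$; otherwise $w$ is a non-seed root and the pair is its max-loop, so the construction replaces it by an arrow out of $w$ incoming to its associated vertex $v$.

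Conversely, a loop labeled $\alpha$ at a vertex other than $v$ sits at some white $w$ whose associated black vertex is $v$, so it comes from an NN pair in $\pi_1^w\cap\pi_2^b$, and every arrow into $v$ emanates from a non-seed white root $w$ whose max-loop is an NN pair in the same intersection. This produces a bijection between NN pairs in $\pi_2^b$ and the disjoint union of the two structures on the right-hand side. Combined with $\#\mathrm{HH}=\#\mathrm{NN}$ and the identification of HH pairs with loops at $v$, the lemma follows. The step requiring the most care is the max-label bookkeeping: one must verify that the arrow drawn from a non-seed root during the hypermap-to-forest construction indeed targets the vertex dictated (via the $\pi$-block rule) by its max-loop, which follows from the ascendant/descendant conventions set out just before Lemma \ref{lem:tree}.
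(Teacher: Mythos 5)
Your proof is correct and follows the same route as the paper: the paper's one-line argument is precisely your observation that $\#\mathrm{HH}=\#\mathrm{NN}$ in each block, with the identification of HH pairs as loops at $v$ and NN pairs as $\alpha$-labeled loops elsewhere or incoming arrows left implicit. You have simply spelled out the bookkeeping (including the max-loop/arrow case) that the paper treats as immediate from the construction.
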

\begin{proof} The result is a direct consequence of the fact that in each block the number of hat/hat pairs is equal to the number of non hat/non hat pairs\end{proof} 

As a final step, we define a permutation between the remaining half edges (thorns) connected to the white vertices and the one connected to the black vertices. If two remaining thorns are paired by $f_3$ then these two thorns are given the same label from $\{a,b,\ldots\}$.  All the original integer labels are then removed.
We denote by $\widetilde{F}$ the resulting forest.
\begin{example}
We continue with the hypermap from Figure \ref{fig:example} and perform the final steps of the construction as described on Figure \ref{fig:loopsperm}.(Note that the geometric shapes are here for reference only, they do not play any role in the final object $\widetilde{F}$).
\begin{figure}[htbp]
  \begin{center}
    \includegraphics[width=0.9\textwidth]{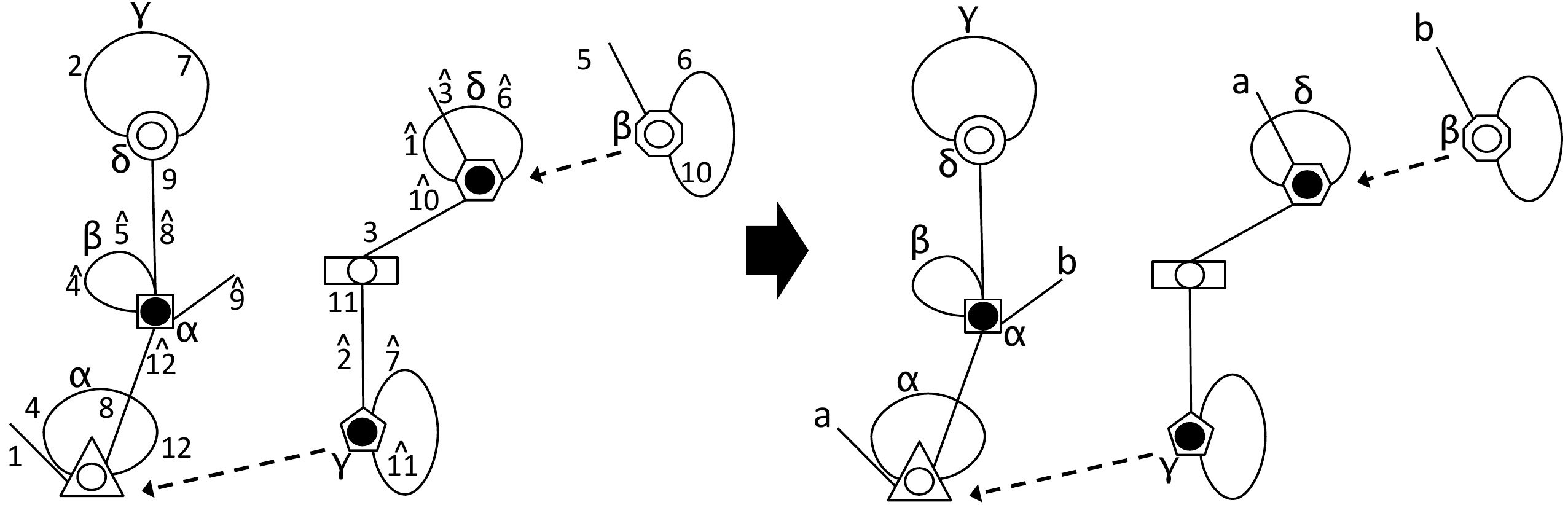}
    \caption{Final steps of the permuted forest construction}
    \label{fig:loopsperm}
  \end{center}
\end{figure}
\end{example}

\noindent As a direct consequence of Definition \ref{def:forest}, $\widetilde{F}$ belongs to $\mathcal{F}(A)$.

\section{Proof of the bijection}
We show that mapping $\Theta_{\bf A}:(f_3,\pi_1,\pi_2) \mapsto \widetilde{F}$ is indeed one-to-one.
\subsection{Injectivity}
We start with a forest $\widetilde{F}$ in $\mathcal{F}({\bf A})$ and show that there is at most one triple $(f_3,\pi_1,\pi_2)$ in $\mathcal{LP}({\bf A})$ such that $\Theta_{\bf A}(f_3,\pi_1,\pi_2) = \widetilde{F}$. The first part is to notice that within the construction in $\Theta_{\bf A}$, the original integer label of the leftmost descendant (thorn, half loop or edge) of the root vertex of the seed tree is necessarily $1$ (this root is the vertex containing $1$ and the labels are sorted in increasing order from left to right).\\
Assume we have recovered the positions of integer labels $1,\widehat{1},2,\widehat{2},\ldots,i$, for some $1 \leq i \leq n-1$, non hat number. Then four cases can occur:
\begin{compactitem}
\item $i$ is the integer label of a thorn of latin label $a$. In this case, $f_3(i)$ is necessarily the integer label of the thorn connected to a black vertex also labeled with $a$. But as the blocks of $\pi_2$ are stable by both $f_3$ and $f_2$ then $\widehat{i} = f_2(i)$ is the integer label of one of the descendants of the black vertex with thorn $a$. As these labels are sorted in increasing order, necessarily, $\widehat{i}$ labels the leftmost descendant with no recovered integer label.
\item $i$ is the integer label of a half loop of greek label $\alpha$. Then, in a similar fashion as above $\widehat{i}$ is necessarily the leftmost unrecovered integer label of the black vertex with symbolic label $\alpha$.
\item $i$ is the integer label of a half loop with no symbolic label (i.e, either $i$ or $f_3(i)$ is the maximum label of the considered white vertex). Then, $\widehat{i}$ is necessarily the leftmost unrecovered integer label of the black vertex at the other extremity of the arrow outgoing from the white vertex containing integer label $i$.
\item $i$ is the integer label of an edge. $\widehat{i}$ is necessarily the leftmost unrecovered integer label of the black vertex at the other extremity of this edge.
\end{compactitem}
Finally, using similar four cases for the black vertex containing the descendant with integer label $\widehat{i}$ and the fact that blocks of $\pi_1$ are stable by $f_3$ and $f_1$, the thorn, half loop or edge with integer label $i+1 = f_1(\,\widehat{i}\,)$ is uniquely determined as well.

We continue with the procedure described above up until we fully recover all the original labels $[n]\cup [\hat{n}]$. According to the construction of $\widetilde{F}$, the knowledge of all the integer labels uniquely determines the blocks of $\pi_1$ and $\pi_2$. The pairing $f_3$ is as well uniquely determined by the loops, edges and thorns with same latin labels.

\begin{example}
Assume the permuted forest $\widetilde{F}$ is the one on the right hand side of Figure \ref{fig:exrecons}. The steps of the reconstruction are summarized in Figure \ref{fig:recons}. We get that the unique triple $(f_3,\pi_1,\pi_2)$ such that $\Theta_{\bf A}(f_3,\pi_1,\pi_2) = \widetilde{F}$ is:
\begin{align*}
\nonumber f_3 &= (1\,\, 4)(\widehat{1}\,\, \widehat{8})(2\,\, 9)(\widehat{2}\,\, \widehat{3})(3\,\, \widehat{11})(\widehat{4}\,\, \widehat{10})(5\,\, 7)(\widehat{5}\,\, 6)(\widehat{6}\,\, 11)(\widehat{7}\,\, \widehat{9})(8\,\, 10)\\
\nonumber \pi_1 &= \{\{\widehat{11},1,\widehat{1},2,\widehat{2},3,\widehat{3},4,\widehat{7},8,\widehat{8},9,\widehat{9},10\};\{\widehat{4},5,\widehat{5},6,\widehat{6},7,\widehat{10},11\}\}\\
\nonumber \pi_2 &= \{\{2,\widehat{2},3,\widehat{3},5,\widehat{5},6,\widehat{6},7,\widehat{7},9,\widehat{9},11,\widehat{11}\};\{1,\widehat{1},4,\widehat{4},8,\widehat{8},10,\widehat{10}\}\}
\end{align*}
\begin{figure}[htbp]
  \begin{center}
    \includegraphics[width=1\textwidth]{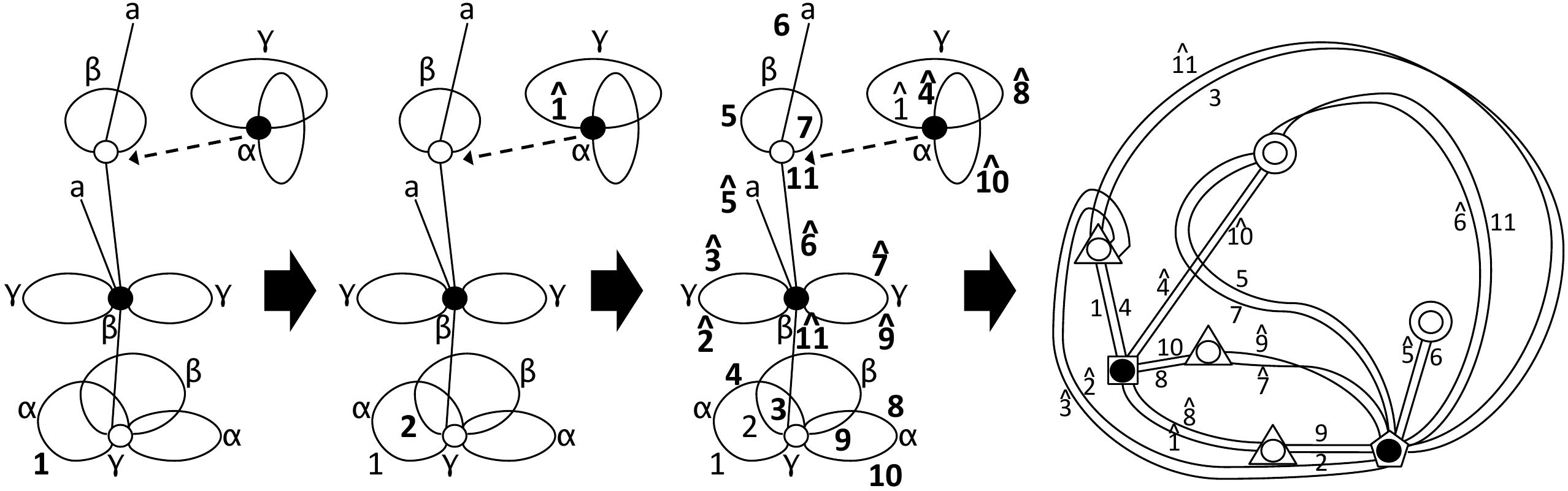}
    \caption{Recovery of the integer labels and the partitioned map}
    \label{fig:recons}
  \end{center}
\end{figure}
\end{example}
\subsection{Surjectivity}
To prove that $\Theta_{\bf A}$ is surjective, we have to show that the reconstruction procedure of the previous section always finishes with a valid output.\\ 
Assume the procedure comes to an end at step $i$ before all the integer labels are recovered (where $i$ is for example non hat, the hat case having a similar proof). It means that we have already recovered all the labels of vertex $v^i$ identified as the one containing $\widehat{i}$ (or $i+1$) prior to this step. This is impossible by construction provided $v^i$ is not the root vertex of the seed tree since the number of times a vertex is identified for the next step is equal to its number of thorns, plus its number of edges, plus twice the number of loops that have the same greek label as $v^i$, plus twice the incoming arrows. Using Lemma \ref{lem:loops}, we have that the sum of the two latter numbers is twice the number of loops of $v^i$. As a consequence, the total number of times the recovering process goes through $v^i$ is exactly (and thus never more than) the degree of $v^i$.

If $v$ is the root vertex of the seed tree, the situation is slightly different due to the fact that we recover label $1$ before we start the procedure. To ensure that the procedure does not terminate prior to its end, we need to show that the $\mid v \mid$-th time the procedure goes through the root vertex is right after all the labels of the forest have been recovered. Again, this is always true because:
\begin{compactitem}
\item The last element of a vertex to be recovered is the label of the maximum element of the associated block. Consequently, all the elements of a vertex are recovered only when all the elements of the descending vertices (through both arrows and edges) are recovered.
\item Lemma \ref{lem:tree} states that the ascendant/descendant structure involving both edges and arrows is a tree rooted in $v$. As a result, the procedure goes the $v$-th time through $v$ only when all the elements of all the other vertices are recovered.
\end{compactitem}  
\section{On proving Theorem \ref{thm:main} using Zonal polynomials}

In the orientable case, one can use Schur symmetric functions and the irreducible characters of the symmetric group to prove the identity in Equation \eqref{eq:orientable} (see \cite{J}). This requires: (i) ($p_{\la}\to s_{\mu}$) the Murnaghan-Nakayama rule \cite[Thm. 7.17.3]{EC2}, (ii) ($p_{\mu}\to m_{\nu}$) finding the number of semistandard Young tableaux  of hook shape $\la=a\, 1^{n-a}$ and type $\mu$ which is just $\binom{\ell(\mu)-1}{a}$, and (iii) using inclusion exclusion. One could try to replicate this on $\Psi^n$ and obtain an algebraic proof of Theorem \ref{thm:main}. We show the outcome after step (i)' using \cite[Cor. 5.2]{HSS}. Steps (ii)' and (iii)' appear quite less tractable.
\begin{compactenum}
\item[(i)'] $\Psi^n({\bf x},{\bf y}) = \frac{|B_n|}{|K_{(n)}|}\sum_{a\geq b \geq 1}\frac{ \varphi^{(a,b,1^{n-a-b})}(n)}{H_{2(a,b,1^{n-a-b})}} Z_{(a,b,1^{n-a-b})}({\bf x})Z_{(a,b,1^{n-a-b})}({\bf y})$.
\end{compactenum}
\section{Appendix : special cases of the main formula} \label{app}
For the formula of Theorem \ref{thm:main} to be complete, we need to observe the following cases:
\begin{compactenum}
\item[(i)] If $q'\neq 0$, then there is at most one given $(i_0,j_0,k_0)$ with $i_0 = 2(j_0+k_0)$, for which $P_{i_0j_0k_0}=1$ instead of $0$. In that situation, we have  
 \footnotesize 
\begin{multline*}
\mathcal{N}({\bf A})\binom{i_0-1}{j_0,k_0,j_0+k_0}^{P_{i_0j_0k_0}}  = \\
 \frac{j_0}{q'}\binom{i_0}{j_0,k_0} \left (\frac{\delta_{p'\neq 0}}{p'}\sum_{i,j,k}{(i-2j-2k)P'}\sum_{i,j,k}{jQ'}+ \sum_{i,j,k}{(i-2j-2k)Q'}\frac{1+\sum_{i,j,k}{(i-1-2j-2k)P}}{n-q-2r}\right )$$
\end{multline*}
\normalsize

\item[(ii)] When $q'\neq 0$ and $n-q-2r=0$, which can only occur if $p=1$ (we assume $P=E_{t,u,v}$, the  elementary tridimensional array with $E_{t,u,v}=1$ and $0$ elsewhere)  the whole summand $\sum_{{\bf A} \in M(\lambda,\mu)}\cdots$ on the RHS of Equation \eqref{eq:mainthm} reduces to:
\footnotesize
\begin{align*}
&\frac{1}{{\bf A!}}\left [ \delta_{(n-2r)p'\neq 0}\frac{(t-2u-2v)\sum_{j,k,l}jQ}{(n-2r)p'}+\delta_{p'\neq 0}u\frac{\sum_{i,j,k}jQ'}{p'q'} + \delta_{p' = 0}\right ]\\
&\times\frac{(n-2r)!p'!q'!\left ( r - p' \right)!\left ( r- q' \right)!}{2^{2r-p'-q'}}\binom{t}{u,v,u+v}\prod_{i,j,l}{\binom{i-1}{j,k,j+k}}^{Q_{ijk}}{\binom{i-1}{j,k,j+k-1}}^{(P'+Q')_{ijk}}
\end{align*}
\normalsize
\end{compactenum}

\bibliographystyle{abbrvnat}
% use the following instead if you encounter problems 
%\bibliographystyle{alpha}
%\bibliography{sample}
%\label{sec:biblio}

\small

\small

\noindent Alejandro H. Morales \\
Department of Mathematics, Massachusetts Institute of Technology \\
Cambridge, MA USA 02139\\
{\tt ahmorales@math.mit.edu}

\bigskip

\noindent Ekaterina A. Vassilieva \\
Laboratoire d'Informatique de l'Ecole Polytechnique\\
91128 Palaiseau Cedex, France\\ 
{\tt katya@lix.polytechnique.fr}

\end{document}